\newtheorem{theorem}{Theorem}
\newtheorem{corollary}{Corollary}[theorem]
\newtheorem{lemma}[theorem]{Lemma}
\newtheorem*{conjecture}{Conjecture}
\title{The Game of Cycles for Grids and Select Theta Graphs}
\author{Christopher Barua, Eric Burkholder, Gabriel Fragoso, Zsuzsanna Szaniszlo}
\begin{document}

\maketitle

\section*{Abstract}

We are investigating who has the winning strategy in a game in which two players take turns drawing arrows trying to complete cycle cells in a graph. A cycle cell is a cycle with no chords. We examine gameboards where the winning strategy was previously unknown. We analyze the game for Theta graphs, generalized Theta graphs with a path length restriction, and grids. Theta graphs have two degree three vertices, and every other vertex has degree two. $\Theta_{k,l,m} $ is a graph with $k+l+m-1$ vertices where the two degree $3$ vertices are connected by disjoint paths of length $k,l$ and $m$. Generalized Theta graphs have two higher degree vertices of the same degree and every other vertex has degree two. We answer the open question related to $\Theta_{3,2,5}$ from the article \textit{The Game of Cycles} \cite{alvarado2021game}, and offer some new directions of research related to this game. 

\section{Introduction}

The Game of Cycles, introduced by Dr. Francis Su in \textit{Mathematics for Human Flourishing} \cite{su2020mathematics} is a two-player game on a graph. The \textit{gameboard} is a simple, connected, undirected, planar graph with its embedding in the plane. The two players take turns directing edges of the graph using arrows, and the goal of the game is to be either the last player to make a move, or be the first player to create a \textit{cycle cell}, which is a chordless cycle of the graph whose edges are all directed the same way. Figure \ref{fig:cycle-cell-defintion} provides an example of how a cycle is not the same as a cycle cell, as the image on the right has a directed cycle, but not a cycle cell.

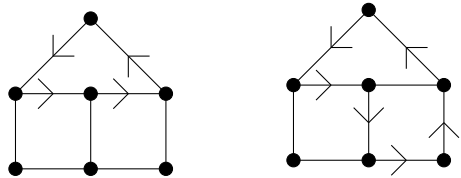
\begin{figure}
    \centering
    \begin{tikzpicture}
    \draw (-1,0)--(1,0);
    \draw (-1,0)--(0,1);
    \draw (1,0)--(0,1);
    \draw (0.5,0)--(0.3,0.2);
    \draw (0.5,0)--(0.3,-0.2);
    \draw (-0.5,0)--(-0.7,0.2);
    \draw (-0.5,0)--(-0.7,-0.2);
    \draw (0.5,0.5)--(0.7828,0.5);
    \draw (0.5,0.5)--(0.5,0.2172);
    \draw (-0.5,0.5)--(-0.5,0.7828);
    \draw (-0.5,0.5)--(-0.2172,0.5);
    \draw (-1,0)--(-1,-1);
    \draw (1,0)--(1,-1);
    \draw (-1,-1)--(1,-1);
    \draw (0,0)--(0,-1);
    \draw [fill=black] (0,0) circle (2.5pt);
    \draw [fill=black] (1,0) circle (2.5pt);
    \draw [fill=black] (-1,0) circle (2.5pt);
    \draw [fill=black] (0,1) circle (2.5pt);
    \draw [fill=black] (-1,-1) circle (2.5pt);
    \draw [fill=black] (1,-1) circle (2.5pt);
    \draw [fill=black] (0,-1) circle (2.5pt);
    \end{tikzpicture}
    \hspace{0.5in}
    \begin{tikzpicture}
    \draw (-1,0)--(1,0);
    \draw (-1,0)--(0,1);
    \draw (1,0)--(0,1);
    \draw (-0.5,0)--(-0.7,0.2);
    \draw (-0.5,0)--(-0.7,-0.2);
    \draw (0.5,0.5)--(0.7828,0.5);
    \draw (0.5,0.5)--(0.5,0.2172);
    \draw (-0.5,0.5)--(-0.5,0.7828);
    \draw (-0.5,0.5)--(-0.2172,0.5);
    \draw (0,-0.5)--(-0.2,-0.3);
    \draw (0,-0.5)--(0.2,-0.3);
    \draw (0.5,-1)--(0.3,-0.8);
    \draw (0.5,-1)--(0.3,-1.2);
    \draw (1,-0.5)--(0.8,-0.7);
    \draw (1,-0.5)--(1.2,-0.7);
    \draw (-1,0)--(-1,-1);
    \draw (1,0)--(1,-1);
    \draw (-1,-1)--(1,-1);
    \draw (0,0)--(0,-1);
    \draw [fill=black] (0,0) circle (2.5pt);
    \draw [fill=black] (1,0) circle (2.5pt);
    \draw [fill=black] (-1,0) circle (2.5pt);
    \draw [fill=black] (0,1) circle (2.5pt);
    \draw [fill=black] (-1,-1) circle (2.5pt);
    \draw [fill=black] (1,-1) circle (2.5pt);
    \draw [fill=black] (0,-1) circle (2.5pt);
    \end{tikzpicture}
    \caption{Two partially filled-in gameboards, the left with a cycle cell, and the right without a cycle cell}
    \label{fig:cycle-cell-defintion}
\end{figure}

\bigskip

\noindent The \textit{rules} of the game are as follows:

\begin{itemize}
    \item A player must make a move if it is possible
    \item Neither player is allowed to create a \textit{sink} or a \textit{source}, which are vertices in which all edges incident to the vertex are directed inwards or outwards, respectively.
\end{itemize}

\noindent The \textit{winner} of the game is the player who first creates a directed cycle cell, or the player who makes the last available move.

\bigskip
\noindent In this paper we analyze the game where the gameboards are Theta graphs and generalized Theta graphs. We also provide the analysis of the previously unknown cases when the gameboards are rectangular grids.

\bigskip

A \textit{Theta graph} is a graph with exactly two vertices of degree three, and with every other vertex of degree two.

\bigskip
\noindent $\Theta_{j,k,l}$ is a graph where 2 vertices are connected with disjoint paths of length $j,k,$ and $l$. Throughout this paper we will concentrate on the number of edges on these paths.

\bigskip
\noindent 
Our main results are as follows:

\bigskip
\noindent
For generalized Theta graphs we find

\bigskip
\textbf{Corollary} \ref{cor:p-paths-extended}
Consider a generalized Theta graph consisting of $p$ disjoint paths between two vertices $u$ and $v$ with the property that every path with an odd number of edges between $u$ and $v$ has at least 5 edges, except for graphs that have an odd number of total edges, there may be a single path of length 3 or a single path of length 1. For such a graph, Player 1 has a winning strategy when there is an odd number of total edges and Player 2 has a winning strategy when there is an even number of total edges.

\bigskip
\textbf{Corollary} \ref{cor:j-2-k-full}
Consider the graph $\Theta_{j,2,k}$ with $j \geq 1, k \geq 2$. If $j+k$ is odd, Player 1 has a winning strategy. If $j+k$ is even, Player 2 has a winning strategy.

\bigskip
\noindent For grids we prove the following Theorem:

\bigskip
\textbf{Theorem} \ref{thm:grids}. Consider a graph that is a grid of $a \times b$ vertices, where exactly one of $a$ or $b$ is odd. Player 1 has a winning strategy for this gameboard.

\bigskip
\noindent Cases for other types of rectangular grids were settled in \cite{alvarado2021game}.

\section{A Sample Game}

To understand the game better we will demonstrate the analysis of the game on a small gameboard. When a gameboard is completed with no possible additional move, there might be some edges that are not directed.  An \textit{unmarkable edge} is an edge that cannot be directed, as doing so would create either a sink or a source. Figure \ref{fig:unmarkable-edge-definition} provides an example of an unmarkable edge.

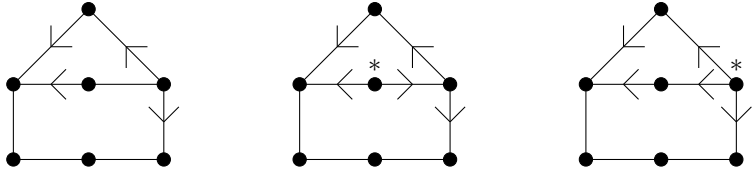
\begin{figure}
    \centering
    \begin{tikzpicture}
    \draw (-1,0)--(0,0);
    \draw [black] (0,0)--(1,0);
    \draw (-1,0)--(0,1);
    \draw (1,0)--(0,1);
    \draw (-1,0)--(-1,-1);
    \draw (1,0)--(1,-1);
    \draw (-1,-1)--(1,-1);
    \draw (0.5,0.5)--(0.7828,0.5);
    \draw (0.5,0.5)--(0.5,0.2172);
    \draw (-0.5,0.5)--(-0.5,0.7828);
    \draw (-0.5,0.5)--(-0.2172,0.5);
    \draw (-0.5,0)--(-0.3,0.2);
    \draw (-0.5,0)--(-0.3,-0.2);
    \draw (1,-0.5)--(0.8,-0.3);
    \draw (1,-0.5)--(1.2,-0.3);
    \draw [fill=black] (0,0) circle (2.5pt);
    \draw [fill=black] (1,0) circle (2.5pt);
    \draw [fill=black] (-1,0) circle (2.5pt);
    \draw [fill=black] (0,1) circle (2.5pt);
    \draw [fill=black] (-1,-1) circle (2.5pt);
    \draw [fill=black] (1,-1) circle (2.5pt);
    \draw [fill=black] (0,-1) circle (2.5pt);
    \end{tikzpicture}
    \hspace{0.5in}
    \begin{tikzpicture}
    \draw (-1,0)--(0,0);
    \draw [black] (0,0)--(1,0);
    \draw (-1,0)--(0,1);
    \draw (1,0)--(0,1);
    \draw (-1,0)--(-1,-1);
    \draw (1,0)--(1,-1);
    \draw (-1,-1)--(1,-1);
    \draw (0.5,0.5)--(0.7828,0.5);
    \draw (0.5,0.5)--(0.5,0.2172);
    \draw (-0.5,0.5)--(-0.5,0.7828);
    \draw (-0.5,0.5)--(-0.2172,0.5);
    \draw (-0.5,0)--(-0.3,0.2);
    \draw (-0.5,0)--(-0.3,-0.2);
    \draw (1,-0.5)--(0.8,-0.3);
    \draw (1,-0.5)--(1.2,-0.3);
    \draw [fill=black] (0,0) circle (2.5pt);
    \draw [fill=black] (1,0) circle (2.5pt);
    \draw [fill=black] (-1,0) circle (2.5pt);
    \draw [fill=black] (0,1) circle (2.5pt);
    \draw [fill=black] (-1,-1) circle (2.5pt);
    \draw [fill=black] (1,-1) circle (2.5pt);
    \draw [fill=black] (0,-1) circle (2.5pt);
    \draw (0,0.2) node[] {*};
    \draw (0.5,0)--(0.3,0.2);
    \draw (0.5,0)--(0.3,-0.2);
    \end{tikzpicture}
    \hspace{0.5in}
    \begin{tikzpicture}
    \draw (-1,0)--(0,0);
    \draw [black] (0,0)--(1,0);
    \draw (-1,0)--(0,1);
    \draw (1,0)--(0,1);
    \draw (-1,0)--(-1,-1);
    \draw (1,0)--(1,-1);
    \draw (-1,-1)--(1,-1);
    \draw (0.5,0.5)--(0.7828,0.5);
    \draw (0.5,0.5)--(0.5,0.2172);
    \draw (-0.5,0.5)--(-0.5,0.7828);
    \draw (-0.5,0.5)--(-0.2172,0.5);
    \draw (-0.5,0)--(-0.3,0.2);
    \draw (-0.5,0)--(-0.3,-0.2);
    \draw (1,-0.5)--(0.8,-0.3);
    \draw (1,-0.5)--(1.2,-0.3);
    \draw [fill=black] (0,0) circle (2.5pt);
    \draw [fill=black] (1,0) circle (2.5pt);
    \draw [fill=black] (-1,0) circle (2.5pt);
    \draw [fill=black] (0,1) circle (2.5pt);
    \draw [fill=black] (-1,-1) circle (2.5pt);
    \draw [fill=black] (1,-1) circle (2.5pt);
    \draw [fill=black] (0,-1) circle (2.5pt);
    \draw (1,0.2) node[] {*};
    \draw (0.5,0)--(0.7,0.2);
    \draw (0.5,0)--(0.7,-0.2);
    \end{tikzpicture}
    \caption{Either direction would create a source at the starred vertices, so the undirected edge on the middle path is unmarkable}
    \label{fig:unmarkable-edge-definition}
\end{figure}

\bigskip
\noindent Let's play the Game of Cycles with the gameboard in Figure \ref{fig:sample-gameboard} below in order to better understand the game. We will call the two edges paired if they are represented by the same type of lines. We show that Player 2 has a winning strategy by responding to Player 1's first move by playing the paired edge in the same direction.

\bigskip 
\noindent First notice that the representation of the graph matters in this game, so we will need to consider two cases for Player 1's first move.

\bigskip
\noindent \textbf{Case 1:} If Player 1 makes a move on the outer perimeter, say $b \rightarrow a$, then Player 2 should make the move on the paired edge. In our example, it is $a \rightarrow d$. From here, Player 1 must avoid the moves $c \rightarrow b$ and $d \rightarrow c$ since Player 2 could make the other of these moves to complete a cycle cell and win the game. So, there are three subcases for Player 1's next move.

\bigskip
\noindent \textbf{Subcase a)} If Player 1 makes the move $b \rightarrow c$, then Player 2 should make the move $c \rightarrow d$. From here, Player 1 cannot make either of the moves $b \rightarrow e$ or $e \rightarrow d$, as this would make a source at vertex $b$ or a sink at $d$. So, Player 1 is forced to make the move $d \rightarrow e$ or $e \rightarrow b$, which means Player 2 can make the other move and complete a cycle cell to win the game.

\bigskip
\noindent \textbf{Subcase b) }If Player 1 makes the move $b \rightarrow e$, then Player 2 should make the move $e \rightarrow d$. From here, Player 1 cannot make either of the moves $b \rightarrow c$ or $c \rightarrow d$, as this would make a source at vertex $b$ or a sink at $d$. So, Player 1 is forced to make the move $d \rightarrow c$ or $c \rightarrow b$, which means Player 2 can make the other move and complete a cycle cell to win the game.

\bigskip
\noindent \textbf{Subcase c)} If Player 1 makes the move $e \rightarrow b$, then Player 2 should make the move $d \rightarrow e$. From here, all of the moves $b \rightarrow c$, $c \rightarrow d$, $d \rightarrow c$, and $c \rightarrow b$ are losing moves for Player 1, as Player 2 can make the last remaining legal move to complete a cycle cell and win the game.

\bigskip
\noindent \textbf{Case 2:} If Player 1's first move is along the inside, say  $b \rightarrow c$, then Player 2 should make the move $c \rightarrow d$. From here, Player 1 must avoid the moves $a \rightarrow b$, $d \rightarrow a$, $e \rightarrow b$, and $d \rightarrow e$ since Player 2 could make a move to complete a cycle cell and win the game. Therefore, Player 1 must must direct one of the outside edges in the opposite direction, say $b \rightarrow e$. So, Player 2 should make the corresponding move $e \rightarrow d$. From here, the only way for Player 1 to avoid creating a sink or a source is to make the move $d \rightarrow a$ or $a \rightarrow b$, which means Player 2 can make the other move and complete a cycle cell to win the game.

\bigskip
\noindent Therefore, for this gameboard Player 2 has a winning strategy.

\begin{figure}
    \centering
    \begin{tikzpicture}
    \draw [dashed] (-1,0)--(1,0);
    \draw (-1,0)--(0,1);
    \draw (1,0)--(0,1);
    \draw [dotted] (-1,0)--(0,-1);
    \draw [dotted] (1,0)--(0,-1);
    \draw [fill=black] (0,0) circle (2.5pt);
    \draw [fill=black] (1,0) circle (2.5pt);
    \draw [fill=black] (-1,0) circle (2.5pt);
    \draw [fill=black] (0,1) circle (2.5pt);
    \draw [fill=black] (0,-1) circle (2.5pt);
    \draw (0,1.3) node[] {$a$};
    \draw (-1,0.3) node[] {$b$};
    \draw (0,0.3) node[] {$c$};
    \draw (1,0.3) node[] {$d$};
    \draw (0,-0.7) node[] {$e$};
    \end{tikzpicture}
    \caption{Example gameboard, with solid edges on top, dashed edges in middle, dotted edges on bottom}
    \label{fig:sample-gameboard}
\end{figure}
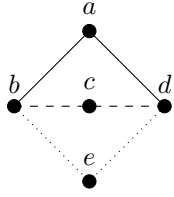

\bigskip
\noindent Previous results and winning strategies described in the article \textit{The Game of Cycles}, \cite{alvarado2021game}, include the following:

\begin{itemize}
    \item Player 2 has a winning strategy for graph $K_4$
    \item The number of unmarkable edges of a polygonal gameboard must be even
    \item Player 1 always wins polygons of odd length, and Player 2 always wins polygons of even length
    \item Winning strategies for a gameboard composed of two polygons that share exactly one edge (i.e., $\Theta_{j,1,k}$ graphs with $j \geq 1, k \geq 2$)
    \item Winning strategies for select symmetric gameboards
\end{itemize}

\begin{figure}
    \centering
    \begin{tikzpicture}
    \draw (0,0)--(0,1);
    \draw (0,0)--(-0.7071,-0.7071);
    \draw (0,0)--(0.7071,-0.7071);
    \draw (0,1)--(-0.7071,-0.7071);
    \draw (0,1)--(0.7071,-0.7071);
    \draw (-0.7071,-0.7071)--(0.7071,-0.7071);
    \draw [fill=black] (0,0) circle (2.5pt);
    \draw [fill=black] (0,1) circle (2.5pt);
    \draw [fill=black] (-0.7071,-0.7071) circle (2.5pt);
    \draw [fill=black] (0.7071,-0.7071) circle (2.5pt);
    \end{tikzpicture}
    \hspace{0.5in}
    \begin{tikzpicture}
    \draw (0,0)--(0,1);
    \draw (0,0)--(1,0);
    \draw (1,0)--(1,1);
    \draw [dashed] (0,1)--(1,1);
    \draw [fill=black] (0,0) circle (2.5pt);
    \draw [fill=black] (0,1) circle (2.5pt);
    \draw [fill=black] (1,0) circle (2.5pt);
    \draw [fill=black] (1,1) circle (2.5pt);
    \end{tikzpicture}
    \hspace{0.5in}
    \begin{tikzpicture}
    \draw (0,0)--(1,0);
    \draw (0,0.75)--(0,-0.75);
    \draw (1,0.75)--(1,-0.75);
    \draw [dashed] (0,0.75)--(1,0.75);
    \draw [dashed] (0,-0.75)--(1,-0.75);
    \draw [fill=black] (0,0) circle (2.5pt);
    \draw [fill=black] (1,0) circle (2.5pt);
    \draw [fill=black] (0,0.75) circle (2.5pt);
    \draw [fill=black] (0,-0.75) circle (2.5pt);
    \draw [fill=black] (1,0.75) circle (2.5pt);
    \draw [fill=black] (1,-0.75) circle (2.5pt);
    \end{tikzpicture}
    \hspace{0.5in}
    \begin{tikzpicture}
    \draw (-1,0)--(1,0);
    \draw (-1,0)--(-1,-0.75);
    \draw (1,0)--(1,-0.75);
    \draw (-1,0)--(-0.5,0.75);
    \draw (1,0)--(0.5,0.75);
    \draw (-0.5,0.75)--(0.5,0.75);
    \draw (-1,-0.75)--(-0.5,-1.25);
    \draw (1,-0.75)--(0.5,-1.25);
    \draw (-0.5,-1.25)--(0.5,-1.25);
    \draw [fill=black] (0,0) circle (2.5pt);
    \draw [fill=black] (-1,0) circle (2.5pt);
    \draw [fill=black] (1,0) circle (2.5pt);
    \draw [fill=black] (-0.5,0.75) circle (2.5pt);
    \draw [fill=black] (0.5,0.75) circle (2.5pt);
    \draw [fill=black] (-1,-0.75) circle (2.5pt);
    \draw [fill=black] (1,-0.75) circle (2.5pt);
    \draw [fill=black] (-0.5,-1.25) circle (2.5pt);
    \draw [fill=black] (0.5,-1.25) circle (2.5pt);
    \end{tikzpicture}
    \caption{Various gameboards, the first three solved by Alvarado et al.}
    \label{fig:various-gameboards}
\end{figure}
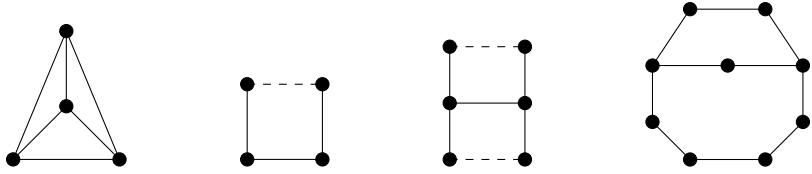

\bigskip
\noindent In this paper, we extend some of the above results, as well as attempt to solve some of the questions from \cite{alvarado2021game}. One such question that has motivated our research is solving the gameboard that is a pentagon and a heptagon that share two consecutive edges, as pictured in Figure \ref{fig:various-gameboards} ($\Theta_{3,2,5}$ graph). We have then expanded this research to consider generalized Theta graphs. 

\section{Lines and Basic Line Interactions}

Before analyzing various classes and structures of gameboards, we are going to consider the structure of the most basic possible gameboard, a single line, or \textit{path} between two vertices. Although a path has no cycles, many gameboards can be analyzed by considering paths of variable length. Therefore, it is valuable to know the number of unmarkable edges of a path with as little information as possible. Note that when we play this game on a single path, we ignore the sink-source rule for the end vertices.

\bigskip
\noindent The number of unmarkable edges of a path depends on the direction of the first and last edges of the path.

\begin{lemma}\label{thm:unmarkable-edges-line}
A path will have an even number of unmarkable edges when the two end edges are directed the same direction and an odd number of unmarkable edges when the two end edges are directed in opposite directions.
\end{lemma}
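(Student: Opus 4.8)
The plan is to set up a left-to-right traversal of the path and track how the orientation of the edges evolves, via a parity invariant. Label the vertices $v_0, v_1, \dots, v_n$ and the edges $e_1, \dots, e_n$, where $e_i$ joins $v_{i-1}$ to $v_i$, and fix the convention that each directed edge points either \emph{forward} (toward $v_n$) or \emph{backward} (toward $v_0$); two edges are then "the same direction" exactly when they carry the same forward/backward label. I would work in a terminal configuration of the game on the path, i.e.\ one in which every undirected edge is unmarkable. Note first that the two end edges are always directed: an end vertex carries no sink/source restriction, so its incident edge can be blocked from at most one side and hence is markable. The goal is to show that the parity of the number of unmarkable edges equals the parity of the number of times the forward/backward label changes as one walks from $e_1$ to $e_n$.

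First I would establish the behavior at a directed internal vertex. If $v_j$ (with $1 \le j \le n-1$) is incident to two directed edges $e_j, e_{j+1}$, then the sink/source prohibition forces one of them to point into $v_j$ and one out of it, which in forward/backward terms means $e_j$ and $e_{j+1}$ carry the \emph{same} label. Consequently, within any maximal block of consecutive directed edges the label is constant: crossing a directed edge never changes it.

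Next I would pin down exactly when an edge is unmarkable, which is the heart of the argument. A single vertex blocks at most one of the two orientations of an incident undirected edge (and only when the other incident edge is already directed), so an undirected $e_i$ can be blocked both ways only if \emph{both} of its endpoints block it; in particular both neighbors $e_{i-1}$ and $e_{i+1}$ must already be directed. This immediately yields that no two unmarkable edges are adjacent and that no unmarkable edge touches an endpoint. Carrying out the short check on the four orientation patterns of $(e_{i-1}, e_{i+1})$, I expect to find that $e_i$ is unmarkable precisely when $e_{i-1}$ and $e_{i+1}$ carry \emph{opposite} labels. Thus crossing an unmarkable edge flips the label, while, by the previous step, crossing any directed edge preserves it.

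Finally I would combine these facts by walking from $e_1$ to $e_n$: the label of $e_n$ differs from that of $e_1$ by exactly the number of flips, and each flip corresponds to exactly one unmarkable edge. Hence the two end edges share the same orientation iff the number of unmarkable edges is even, and point oppositely iff it is odd, which is the claim. I anticipate the main obstacle to be the case analysis characterizing unmarkability, together with the accompanying non-adjacency observation: this is the step that actually produces the crucial label \emph{flip}, whereas the run-uniformity and the concluding parity count are routine once it is in place.
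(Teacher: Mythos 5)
Your proof is correct, but it takes a genuinely different route from the paper. The paper's proof is a two-line reduction: it closes the path into a polygon by adding an auxiliary edge between the two end vertices, cites the result of Alvarado et al.\ that a polygonal gameboard has an even number of unmarkable edges, and then observes that the auxiliary edge is markable precisely when the two end edges agree in direction (so the path inherits even or odd parity accordingly). You instead argue from first principles with a transport-of-label invariant: consecutive directed edges at an internal vertex must share the forward/backward label, an undirected edge in a terminal position is unmarkable exactly when its two (necessarily directed, necessarily non-adjacent) neighbors carry opposite labels, and hence each unmarkable edge flips the label while directed junctions preserve it, so the end labels agree iff the flip count is even. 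Your case analysis at the unmarkability step is right, including the observations that end edges are always markable (since end vertices are exempt from the sink/source rule on a path) and that unmarkable edges cannot be adjacent. What each approach buys: the paper's argument is shorter and leverages an existing theorem, but is correspondingly opaque about \emph{where} the unmarkable edges sit; yours is self-contained (it does not need the polygon result, and in fact essentially reproves it) and delivers a strictly stronger conclusion, namely that the number of unmarkable edges in a terminal position \emph{equals} the number of direction changes along the sequence of directed edges, not merely that the parities match. That finer structural picture is close in spirit to the signal and interaction machinery the paper develops later for the $j$-$2$-$k$ analysis, so your argument would integrate naturally with those sections.
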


\begin{proof}
Imagine we add an edge to the path between the two end vertices. What we have is a polygon with one more edge than the original path. Take note of which edge was not part of the original path. We know that polygons must have an even number of unmarkable edges \cite{alvarado2021game}. 

\bigskip
\noindent If the two end edges are directed in the same direction, then the new edge will not be unmarkable. So, since there must be an even number of unmarkable edges overall, there are an even number of unmarkable edges on the original path.

If the two end edges are directed in opposite directions, then the new edge will be unmarkable. So, since there must be an even number of unmarkable edges overall, there are an odd number of unmarkable edges on the original path.
\end{proof}

\bigskip
\noindent Let's consider playing the Game of Cycles on a path. Since playing the game on a path does not have any cycles, the winner is the player who plays the last legal move. We can use Lemma \ref{thm:unmarkable-edges-line} as part of our strategy, as knowing the parity of unmarkable edges will tell us which player can make the last move. 

\bigskip
\noindent Throughout this paper we will often concentrate on edges incident to a particular vertex. For ease of discussion we introduce two new definitions.
We call a \textit{primary edge} an edge that is incident to a vertex of interest (which will define when encountering a new graph), and a \textit{secondary edge} an edge that is adjacent to a primary edge, but is not itself a primary edge. 

\bigskip
\noindent For a path, the vertices of interest are the end vertices. Notice that directing a secondary edge has the same effect as directing its adjacent primary edge, as in order to follow the sink-source rule the adjacent primary edge may only be directed one way. So, for paths with five edges or more, the player who marks a primary or secondary edge first will lose the game, as the other player can follow up by playing the primary edge on the opposite side of the path in the direction that would guarantee the appropriate number of unmarkable edges. Consider an example of this in Figure \ref{fig:primary-secondary-edges}. Player 1 has made the move $b \rightarrow c$. However, the edge $bc$ is a secondary edge, so Player 2 can follow up by playing the move $f \rightarrow e$. This guarantees that the primary edges will point in opposite directions, meaning there will be odd number of unmarkable edges total. Since the path has five edges to start with, there will be four edges that are marked throughout the game, meaning Player 2 will make the final legal move and win. We extended this result to find the winner for a game of any length of path.

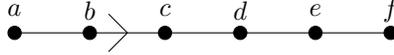
\begin{figure}
    \centering
    \begin{tikzpicture}
    \draw (-3,0)--(2,0);
    \draw (-1.5,0)--(-1.75,0.25);
    \draw (-1.5,0)--(-1.75,-0.25);
    \draw [fill=black] (-3,0) circle (2.5pt);
    \draw [fill=black] (-2,0) circle (2.5pt);
    \draw [fill=black] (-1,0) circle (2.5pt);
    \draw [fill=black] (0,0) circle (2.5pt);
    \draw [fill=black] (1,0) circle (2.5pt);
    \draw [fill=black] (2,0) circle (2.5pt);
    \draw (-3,0.3) node[] {$a$};
    \draw (-2,0.3) node[] {$b$};
    \draw (-1,0.3) node[] {$c$};
    \draw (0,0.3) node[] {$d$};
    \draw (1,0.3) node[] {$e$};
    \draw (2,0.3) node[] {$f$};
    \end{tikzpicture}
    \caption{Player 1 has made a losing move in this game}
    \label{fig:primary-secondary-edges}
\end{figure}

\begin{theorem}\label{thm:winning-strategy-line}
Assume we are playing the Game of Cycles on a path, and say Player W is the player with the winning strategy. If the path has an odd number of edges Player 1 is Player W, and if the path has an even number of edges Player 2 is Player W.
\end{theorem}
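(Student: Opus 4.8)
The plan is to give a single \emph{pairing strategy} that settles both parities at once, rather than counting unmarkable edges case by case. Label the path $v_0 v_1 \cdots v_n$ with edges $e_1,\dots,e_n$, where $e_i = v_{i-1}v_i$. Since a path contains no cycles, it contains no cycle cells, so the only way to win is to make the last legal move; the whole question is therefore about the \emph{parity of the number of moves}. I would track this parity by exhibiting, for the player who is supposed to win (Player W), a rule that always furnishes a legal reply to the opponent's move, so that the game can only terminate on Player W's turn.

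The engine is the involution $\phi$ that reverses the order of the edges, $e_i \mapsto e_{n+1-i}$, while preserving each edge's orientation \emph{type} (calling $e_i$ \emph{forward} if directed $v_{i-1}\to v_i$ and \emph{backward} otherwise, $\phi$ sends forward to forward and backward to backward). Concretely, $\phi$ is the reflection $v_j \mapsto v_{n-j}$ composed with reversing every arrow. The crucial point I would establish first is that $\phi$ preserves legality: reversing all arrows turns every sink into a source and every source into a sink, while the reflection is a graph automorphism carrying the set of endpoints to itself; hence a position is free of sinks and sources exactly when its $\phi$-image is, and a move is legal exactly when its $\phi$-image is. This compatibility is precisely what lets the sink--source rule coexist with mirroring.

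Next I would run the strategy, maintaining the invariant that the board is $\phi$-symmetric after each of Player W's turns. If $n$ is even, the edges split into the mirror pairs $\{e_i, e_{n+1-i}\}$ with no fixed edge, and Player 2 answers each move on $e$ by the $\phi$-image move on $\phi(e)$; by symmetry $\phi(e)$ is still unmarked and (by the compatibility above) the reply is legal, so Player 2 always has a response and makes the last move. If $n$ is odd there is a unique central edge $e_{(n+1)/2}$, which is $\phi$-fixed regardless of its direction; Player 1 opens by directing it, making the board $\phi$-symmetric, and then mirrors Player 2 exactly as before, again securing the last move. Notably this handles all lengths $n \ge 1$ uniformly, with no separate small-case analysis.

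The one place the argument needs genuine care---and the main obstacle---is legality of the reply when the opponent's edge $e$ and its image $\phi(e)$ share a vertex, since then marking $e$ could in principle spoil the reply on $\phi(e)$. This occurs only for the central pair $\{e_{n/2}, e_{n/2+1}\}$ when $n$ is even (for $n$ odd no mirror pair is adjacent). There I would verify directly that the shared internal vertex $v_{n/2}$ has degree two and, under the mirrored pair, receives exactly one incoming and one outgoing arrow, so neither a sink nor a source appears. As a consistency check against the parity viewpoint, this strategy forces the two end edges $e_1, e_n$ (themselves a mirror pair) to point the same way, so by Lemma~\ref{thm:unmarkable-edges-line} the number $u$ of unmarkable edges is even; since the number of moves equals $n-u \equiv n \pmod 2$, the last move falls to Player 1 when $n$ is odd and to Player 2 when $n$ is even, matching the conclusion obtained strategically.
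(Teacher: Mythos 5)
Your proof is correct, but it takes a genuinely different route from the paper's. The paper argues by induction in increments of four: after base cases for paths of one through four edges, Player W plays so as to make the last move on the inner $\ell$ edges, forcing the opponent to be first to touch a primary or secondary edge, and then orients the opposite primary edge so that Lemma~\ref{thm:unmarkable-edges-line} yields the favorable parity of unmarkable edges. Your mirror-reverse pairing strategy replaces all of this with a single symmetry argument, uniform in $n$: no induction, no small-case analysis, and no counting of unmarkable edges (you invoke Lemma~\ref{thm:unmarkable-edges-line} only as a consistency check, which is indeed not needed for the strategic argument). You also correctly isolate the one delicate point: the reply must be legal in the position \emph{after} the opponent's move, not merely in the symmetric position before it, and these can differ at a vertex of the reply edge only when $e$ and $\phi(e)$ are adjacent, i.e.\ for the central pair $\{e_{n/2}, e_{n/2+1}\}$ when $n$ is even; your direct check (one arrow in, one arrow out at $v_{n/2}$) handles exactly the case that bare $\phi$-equivariance does not, while at the non-shared endpoint of the reply edge equivariance does finish the job, since a violation there would reflect to a sink or source already created by the opponent's supposedly legal move. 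It is worth noting that your strategy is precisely the mirror-reverse move that the paper itself introduces later and uses to prove Theorem~\ref{p-paths} on $p$ paths, so you have in effect specialized the paper's later, more powerful technique down to a single path; your version even sidesteps the ``doubly secondary'' wrinkle the paper flags for paths of length three, since for odd $n$ your Player 1 simply opens on the $\phi$-fixed central edge. What your approach buys is uniformity and brevity; what the paper's approach buys is the primary/secondary-edge and unmarkable-edge machinery, which it deliberately develops here because it is reused in the $j$-$2$-$k$ analysis that follows.
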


\begin{proof}
For lines with one edge or three edges, Player 1 obviously has a winning strategy by playing the only edge or playing the center edge, respectively.

\bigskip
\noindent For lines of two edges or four edges, Player 2 obviously has a winning strategy by playing the only available edge or by orienting the opposite primary edge in the same direction as the last made move, respectively.

\bigskip
\noindent Assume that Player W has a winning strategy for a path of $\ell$ edges. Consider a path of length $\ell + 4$. We may imagine that such a path is actually the path of length $\ell$ with two additional edges on either side. Those four additional edges are the two primary edges and the two secondary edges.

\bigskip
\noindent In order to win, Player $W$ wants to play the last legal move on the inner $\ell$ edges, forcing the other player to make the first move on one of the primary or secondary edges. We assumed that Player W has a strategy to win the line of length $\ell$ which would mean they have a strategy to play the last legal move on $\ell$ edges. Thus, Player W can force the other player to play on a primary or secondary edge first, and then orient the opposite primary edge accordingly.

\bigskip
\noindent Of course, if the other player plays a primary or secondary edge earlier than this, Player $W$ must play the opposite primary edge accordingly.

\bigskip
\noindent Therefore, by induction, Player 1 is Player W for a path with and odd number edges and Player 2 is Player W for a path with an even number of edges.
\end{proof}

\bigskip
\noindent We will use this theorem about the game on a path to analyze other classes of gameboards. Notice that on a path of three edges, Player 1 is the first player to direct a primary or secondary edge, but still wins the game, which goes against our earlier intuition. This is because the center edge for paths of length three is \textit{doubly secondary}, as it is secondary to two different primary edges. This fact makes analyzing a game when there are multiple short paths difficult.

\bigskip
Finally, we are also interested how these same primary and secondary edges can interact between paths. Say an \textit{explicit signal} is the direction given by a primary edge and an \textit{implicit signal} is the direction that an unmarkable primary edge should be according to the other edges incident with the vertex of interest. We will use -1 to represent a signal directed away from a degree 3 vertex, and use 1 to represent a signal directed toward a degree 3 vertex. 

\bigskip
\noindent Consider Case B of Figure \ref{fig:guaranteed-even-unmarkable-edges}. The bottom left edge of this case is not directed. Suppose that is is unmarkable. However, if we determined the direction of this edge based on the directed adjacent edges pictured, we would have to direct it way from the degree 3 vertex in order to avoid creating a sink. This is the implicit signal of this edge. So, for the left degree 3 vertex of this case there are two explicit 1 signals and one implicit -1 signal.

\bigskip
\noindent We define an \textit{interaction} of a path, $\{a,b\}$, the multiset of the two signals of the two primary edges of the path. There are three types of interactions: $\{1,1\}$, $\{-1,-1\}$, and $\{1,-1\}$. When both of the signals given are explicit signals, the number of unmarkable edges of the path is given by Lemma \ref{thm:unmarkable-edges-line}. That is, for explicit signals the $\{1,1\}$ and $\{-1,-1\}$ interactions will result in an odd number of unmarkable edges, and the $\{1,-1\}$ interaction results in an even number of unmarkable edges. We will show that the same result holds when one or both of the signals are implicit.

\bigskip
\noindent We denote the parity of the number of unmarkable edges by $p$.

\begin{lemma}\label{thm:implicit-signals-same}
Implicit signals and explicit signals in  an interaction of a path with at least $4$ edges result in the same parity of the number of unmarkable edges of the path.
\end{lemma}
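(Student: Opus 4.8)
The plan is to show that replacing an explicit signal by an implicit signal at an endpoint of a path does not change the parity of the number of unmarkable edges, and since any interaction is built from two endpoint signals, the result then follows from Lemma~\ref{thm:unmarkable-edges-line} (which covers the all-explicit case) by changing each explicit endpoint into its implicit counterpart one at a time. The key observation to exploit is the definition: an implicit signal is the direction an \emph{unmarkable} primary edge ``should'' point according to the other edges at the degree-3 vertex. So the path in question does not actually have a directed edge at that endpoint; instead the endpoint is a degree-3 vertex whose two \emph{off-path} edges force a would-be direction on the missing primary edge. The first step I would take is to make precise what ``the path'' is in this situation: the path consists of the edges strictly between the two vertices of interest, and at an implicit endpoint the adjacent primary edge is unmarkable and hence not counted as a directed boundary edge.

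The core of the argument is a local parity bookkeeping at the implicit endpoint. First I would recall the closing-the-cycle trick from the proof of Lemma~\ref{thm:unmarkable-edges-line}: adjoin an edge between the two endpoints to form a polygon, use the known fact that a polygon has an even number of unmarkable edges \cite{alvarado2021game}, and read off the parity of the path from whether the adjoined edge is unmarkable. The whole question therefore reduces to a single clean claim: \emph{the adjoined closing edge is unmarkable precisely when the two endpoint signals (explicit or implicit) are equal}, i.e.\ for a $\{1,1\}$ or $\{-1,-1\}$ interaction, and markable for a $\{1,-1\}$ interaction. For explicit signals this is exactly what Lemma~\ref{thm:unmarkable-edges-line} already encodes. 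So I only need to verify that an implicit signal of value $s$ constrains the closing edge in the same way an explicit signal of value $s$ does.

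To do that I would argue at the degree-3 vertex $v$ carrying the implicit signal. By definition the on-path primary edge at $v$ is unmarkable: both orientations of it would create a sink or a source at $v$, which forces the two off-path edges at $v$ to both point the same way relative to $v$, and the implicit signal value $s$ records that common direction. The point is that for the purpose of orienting the closing edge, $v$ behaves exactly as if a genuine explicit edge of direction $s$ were present: the sink/source obstruction the closing edge feels at $v$ depends only on the net in/out status that the off-path edges already impose, and that status is identical to the one an explicit $s$-edge would impose. Hence the closing edge is unmarkable under an implicit $s$ iff it is unmarkable under an explicit $s$, and the interaction parities match.

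The main obstacle I anticipate is bookkeeping the sink/source condition carefully at an implicit endpoint, because there the relevant primary edge is itself undirected (unmarkable), so one must check that this undirected edge does not interfere with the unmarkability analysis of the closing edge, and that the length-$\geq 4$ hypothesis rules out the degenerate short-path phenomena (the ``doubly secondary'' center edge flagged earlier for length-$3$ paths) that would let a single edge play two roles at once. A secondary subtlety is that both endpoints may be implicit simultaneously; I would handle this by changing one endpoint at a time, so that each step is a single explicit-to-implicit swap whose parity-neutrality has already been established, and then compose the two swaps.
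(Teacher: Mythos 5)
Your reduction stalls at its central claim, namely that ``an implicit signal of value $s$ constrains the closing edge in the same way an explicit signal of value $s$ does.'' This is not just unproven; under either reading of your construction it is false. A sink or source requires \emph{every} edge at the vertex to be directed, and at an implicit endpoint the primary edge $e$ is unmarkable, hence permanently undirected. If you form the standalone polygon (path plus closing edge), the off-path edges that carry the implicit signal are deleted, so nothing at that endpoint blocks either orientation of the closing edge; worse, $e$'s own unmarkability was caused by those deleted off-path edges, so you have not shown the inherited position on the polygon is even coherent before invoking the even-parity fact of \cite{alvarado2021game}. If instead you attach the closing edge inside the full graph, the endpoint has degree $4$ with $e$ undirected, so again no orientation of the closing edge can create a sink or source there (and the even-unmarkable-edges theorem is a statement about polygonal gameboards, not about cycles embedded in larger graphs). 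Either way, an implicit $s$ obstructs the closing edge in \emph{no} direction at that end, not in the same single direction an explicit $s$-edge would --- so your ``single clean claim'' fails exactly at the implicit endpoints it was introduced to handle.

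The parity conclusion is rescued by a cancellation your sketch never performs, and this is how the paper argues. For $e$ to be unmarkable, its forced direction at the degree-$3$ vertex must also be blocked at its other, degree-$2$ endpoint, which forces the adjacent secondary edge $f$ to be directed already --- and with signal opposite to $s$ when viewed as the end edge of the shorter path obtained by deleting $e$. That shorter path has an all-explicit interaction, so Lemma \ref{thm:unmarkable-edges-line} applies, but with one signal flipped, which flips the predicted parity ($\{1,1\}$ and $\{-1,-1\}$ give odd, $\{1,-1\}$ gives even); adding back the one unmarkable edge $e$ flips the parity a second time, recovering the prediction for $s$. With two implicit endpoints both effects occur twice and cancel, which is why the length $\geq 4$ hypothesis matters: it makes the two secondary edges distinct, so no single edge plays both roles (the ``doubly secondary'' issue you correctly flag). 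Your one-endpoint-at-a-time composition is fine in principle, but each swap must go through this unmarkable-primary-plus-flipped-secondary bookkeeping; if you repair your argument by deleting the unmarkable primary edge before closing the cycle, you have reproduced the paper's reduction rather than an alternative proof.
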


\begin{proof}
We handle this proof with two cases regarding the number of implicit signals in the interaction. We need the length requirement in order to assure that no edge sends signals to both end vertices.

\bigskip
\noindent \textbf{Case 1: One implicit signal}

\bigskip
\noindent  Since there is one implicit signal, then there is already one unmarkable primary edge. Consider the smaller path which excludes this primary edge. This smaller path has an interaction in which both of the signals are explicit, and the parity of the number of unmarkable edges is $p + 1 \mod 2$. This is because we know the secondary edge adjacent to the unmarkable primary edge has the opposite signal than that given by the primary edge, so the parity of the number of unmarkable edges will also be the opposite.

\bigskip
\noindent Hence, the parity of unmarkable edges for the whole path is $((p + 1 \mod 2) + 1) \mod 2 = p \mod 2$. Therefore, the parity of unmarkable edges is unchanged from that of an interaction with two explicit signals.

\bigskip
\noindent \textbf{Case 2: Two implicit signals}

\bigskip
\noindent Since there are two implicit signals, there are two unmarkable primary edges. Consider the smaller path which excludes these primary edges. This smaller path has an interaction in which both of the signals are explicit, and the parity of the number of unmarkable edges is $p \mod 2$. This is because we know the secondary edges adjacent to the unmarkable primary edges have the opposite signal than that given by the primary edges, but since the signal is switched on both sides, there will be the same parity of unmarkable edges on this smaller path.

\bigskip
\noindent So, the parity of unmarkable edges for the whole path is $((p \mod 2) + 2) \mod 2 = p \mod 2$. Therefore, the parity of unmarkable edges is unchanged from that of an interaction with two explicit signals.

\end{proof}

\noindent It is not difficult to show that this lemma also holds for paths of length one, two, and three.

\section{Generalized Theta graphs}

\subsection{Structure of the Graph}

Plane Theta graphs have the structure of $p$ paths between two vertices $u$ and $v$. For the sake of ease of the discussion we will require that $u$ and $v$ are the leftmost and rightmost vertices respectively, as seen in Figure \ref{fig:p-paths-graph}. We will also require that the graph is symmetric across a vertical line of symmetry.

\begin{figure}
    \centering
    \begin{tikzpicture}
    \draw (-3,1)--(3,1);
    \draw (-3.5,1.5)--(-3,1);
    \draw (3,1)--(3.5,1.5);
    \draw (-3.5,1.5)--(3.5,1.5);
    \draw (-3.5,1.5)--(-3,2);
    \draw (-1,2)--(1,2);
    \draw (-1,2)--(-3.5,1.5);
    \draw (1,2)--(3.5,1.5);
    \draw (-3,2)--(-3,2.5);
    \draw (-3,2.5)--(3,2.5);
    \draw (3,2.5)--(3,2);
    \draw (3.5,1.5)--(3,2);
    \draw [dashed] (0,3)--(0,0.5);
    \draw [fill=black] (1,2) circle (2.5pt);
    \draw [fill=black] (-3,2) circle (2.5pt);
    \draw [fill=black] (-3,1) circle (2.5pt);
    \draw [fill=black] (-1,1) circle (2.5pt);
    \draw [fill=black] (1,1) circle (2.5pt);
    \draw [fill=black] (3,1) circle (2.5pt);
    \draw [fill=black] (-1,2) circle (2.5pt);
    \draw [fill=black] (3,2) circle (2.5pt);
    \draw [fill=black] (-3,2.5) circle (2.5pt);
    \draw [fill=black] (-1,2.5) circle (2.5pt);
    \draw [fill=black] (3,2.5) circle (2.5pt);
    \draw [fill=black] (1,2.5) circle (2.5pt);
    \draw [fill=black] (0.,1.5) circle (2.5pt);
    \draw [fill=white] (-3.5,1.5) circle (2.5pt);
    \draw [fill=white] (3.5,1.5) circle (2.5pt);
    \draw (-4,1.5) node[] {$u$};
    \draw (4,1.5) node[] {$v$};
    \end{tikzpicture}
    \caption{Example of a path of $p$ paths}
    \label{fig:p-paths-graph}
\end{figure}
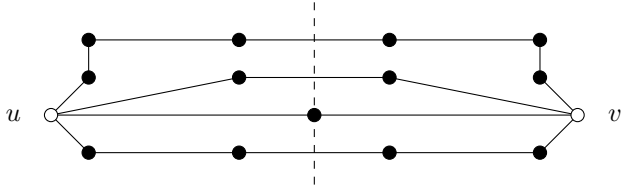

\bigskip
\noindent This second requirement allows us to define two different types of edges. An \textit{unmirrorable edge} is an edge that is self-reflective under the vertical line of symmetry. These are the edges in Figure \ref{fig:p-paths-graph} in which the vertical line of symmetry goes through the edge. A \textit{mirrorable edge} is an edge that is not self-reflective under the vertical line of symmetry. These are all of the other edges of Figure \ref{fig:p-paths-graph}.

\subsection{Strategy for generalized Theta graphs}

\bigskip
\noindent In the descriptions of winning strategies we will describe a common response to a given move as follows.

\bigskip
\noindent The \textit{mirror-reverse move} is the move made by flipping the previous move over the vertical line of symmetry, and then reversing the direction of the newly played edge. An example of the mirror-reverse move is shown in Figure \ref{fig:mirror-reverse-move}. We note that the mirror-reverse move is only a possible response if the previous move was made on a mirrorable edge.

\begin{figure}
    \centering
    \begin{tikzpicture}
    \draw (-2,0)--(3,0);
    \draw (-1.5,0)--(-1.75,0.25);
    \draw (-1.5,0)--(-1.75,-0.25);
    \draw [fill=black] (-2,0) circle (2.5pt);
    \draw [fill=black] (-1,0) circle (2.5pt);
    \draw [fill=black] (0,0) circle (2.5pt);
    \draw [fill=black] (1,0) circle (2.5pt);
    \draw [fill=black] (2,0) circle (2.5pt);
    \draw [fill=black] (3,0) circle (2.5pt);
    \draw [white] (0,-0.75) circle (2.5pt);
    \end{tikzpicture}
    \hspace{4in}
    \begin{tikzpicture}
    \draw (-2,0)--(3,0);
    \draw (-1.5,0)--(-1.75,0.25);
    \draw (-1.5,0)--(-1.75,-0.25);
    \draw (2.5,0)--(2.75,0.25);
    \draw (2.5,0)--(2.75,-0.25);
    \draw [fill=black] (-2,0) circle (2.5pt);
    \draw [fill=black] (-1,0) circle (2.5pt);
    \draw [fill=black] (0,0) circle (2.5pt);
    \draw [fill=black] (1,0) circle (2.5pt);
    \draw [fill=black] (2,0) circle (2.5pt);
    \draw [fill=black] (3,0) circle (2.5pt);
    \draw [white] (0,-0.75) circle (2.5pt);
    \end{tikzpicture}
    \hspace{4in}
    \begin{tikzpicture}
    \draw (-2,0)--(3,0);
    \draw (-1.5,0)--(-1.75,0.25);
    \draw (-1.5,0)--(-1.75,-0.25);
    \draw (2.5,0)--(2.25,0.25);
    \draw (2.5,0)--(2.25,-0.25);
    \draw [fill=black] (-2,0) circle (2.5pt);
    \draw [fill=black] (-1,0) circle (2.5pt);
    \draw [fill=black] (0,0) circle (2.5pt);
    \draw [fill=black] (1,0) circle (2.5pt);
    \draw [fill=black] (2,0) circle (2.5pt);
    \draw [fill=black] (3,0) circle (2.5pt);
    \end{tikzpicture}
    \caption{A move, its mirror move, and the mirror-reverse move}
    \label{fig:mirror-reverse-move}
\end{figure}
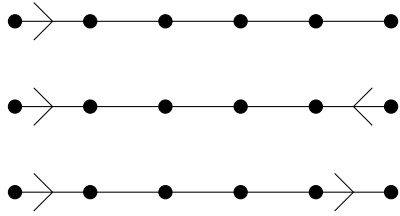

\bigskip
\noindent As explored in \cite{alvarado2021game}, continually playing the mirror-reverse move guarantees that later mirror-reverse moves will always be available. That is, if the mirror-reverse move would be illegal for some reason, it must be that the previous move was also in fact illegal. This is because mirror-reverse moves guarantee a structure that replicates direction changes on a path.

\bigskip
\noindent We also define two more types of edges that are used in these winning strategies. We say that two edges are \textit{paired} in the game if one of them is the edge played by the Winner in response to a move on the other edge by the other player. At the beginning of the game, two edges are paired when they are each others' reflection over the vertical line of symmetry, so all of the mirrorable edges are paired edges.

\bigskip
\noindent An edge is a \textit{loner edge} if the edge is not paired. That is, in the winning strategy the response to a move on a loner edge is not an edge on the same path. At the beginning of the game, the loner edges are the unmirrorable edges.

\bigskip
\noindent It is possible that a loner edge may become a paired edge, or that a paired edge may become a loner edge. This will become important when at some point in the game there are exactly three undirected markable edges on a path, and one of these edges is a loner edge.  Then, if the Loser directs one of the paired edges, we redefine the pairing of these edges. We say that the two remaining unmarked edges are now paired edges, and the edge that was just marked was a loner edge. In Figure \ref{fig:paired-loner-switch-roles} we have the initial conditions of this case, the move that is made in this case, and the new set of paired edges. It is easy to see that this change of pairing can only happen on paths with an odd number of edges. We will call this strategy \textit{re-pairing}.

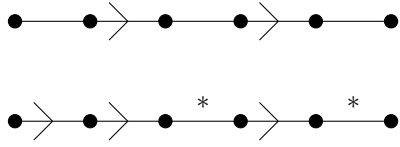
\begin{figure}
    \centering
    \begin{tikzpicture}
    \draw (-2,0)--(3,0);
    \draw (-0.5,0)--(-0.75,0.25);
    \draw (-0.5,0)--(-0.75,-0.25);
    \draw (1.5,0)--(1.25,0.25);
    \draw (1.5,0)--(1.25,-0.25);
    \draw [fill=black] (-2,0) circle (2.5pt);
    \draw [fill=black] (-1,0) circle (2.5pt);
    \draw [fill=black] (0,0) circle (2.5pt);
    \draw [fill=black] (1,0) circle (2.5pt);
    \draw [fill=black] (2,0) circle (2.5pt);
    \draw [fill=black] (3,0) circle (2.5pt);
    \draw [white] (0,-0.75) circle (2.5pt);
    \end{tikzpicture}
    \hspace{4in}
    \begin{tikzpicture}
    \draw (-2,0)--(3,0);
    \draw (-1.5,0)--(-1.75,0.25);
    \draw (-1.5,0)--(-1.75,-0.25);
    \draw (-0.5,0)--(-0.75,0.25);
    \draw (-0.5,0)--(-0.75,-0.25);
    \draw (1.5,0)--(1.25,0.25);
    \draw (1.5,0)--(1.25,-0.25);
    \draw [fill=black] (-2,0) circle (2.5pt);
    \draw [fill=black] (-1,0) circle (2.5pt);
    \draw [fill=black] (0,0) circle (2.5pt);
    \draw [fill=black] (1,0) circle (2.5pt);
    \draw [fill=black] (2,0) circle (2.5pt);
    \draw [fill=black] (3,0) circle (2.5pt);
    \draw (0.5,0.2) node[] {*};
    \draw (2.5,0.2) node[] {*};
    \end{tikzpicture}
    \caption{Top image provides initial conditions, bottom image provides new move, and stars newly paired edges}
    \label{fig:paired-loner-switch-roles}
\end{figure}

Now that we have made these definitions, we may proceed with our result.

\begin{theorem}\label{thm:p-paths}
Consider a generalized Theta graph consisting of $p$ disjoint paths between two vertices $u$ and $v$ with the property that every path with an odd number of edges between $u$ and $v$ has at least 5 edges. For such a graph, Player 1 has a winning strategy when there is an odd number of total edges and Player 2 has a winning strategy when there is an even number of total edges.
\end{theorem}

\begin{proof}
The winning strategy will be described based on the the total number of edges and the opponent's moves. Note that while a game may change from Case I.1 to Case I.2, the opposite may not happen.

\bigskip
\noindent \textbf{Case I} \textbf{Odd number of total edges}

\bigskip
\noindent Player 1 's first move is directing a loner edge.
    
\bigskip
\noindent \textbf{I.1) Start of the game}: \textbf{There is at least one path whose primary edges have or could be directed to have an explicit signal from left to right and at least one path whose primary edges have or could be directed to have an explicit signal from right to left}

\bigskip
\noindent Note that every game with at least 2 paths starts in this configuration. While the conditions of this case are satisfied, the information given by the primary edges suggest that a cycle (note: not necessarily a cycle cell) is possible.
        
\bigskip
\noindent While this condition is satisfied, we will show that Player 1 may make the last move on every path. Therefore, if the winning player wins with a cycle cell, Player 1 will complete the cycle cell by completing the path, and if the winning player wins by making the last move, Player 1 will make the last move on every path, and therefore on the graph as well.

\bigskip
\noindent If \textbf{Player 2 plays a loner edge} Player 1's response is to play a loner edge. Because there are an odd number of edges overall, there must be an odd number of paths with an odd number of edges, and therefore an odd number of loner edges at the beginning of the game. Since Player 1 started by playing a loner edge, if they play a loner edge whenever Player 2 plays a loner edge, Player 1 will direct the last loner edge.This also includes the games with re-pairings. 
            
 \bigskip
\noindent If \textbf{Player 2 plays a paired edge} we have two possibilities.
            
\bigskip
\noindent If the played edge is on a path with an even number of edges or the played edge is on a path with an odd number of edges and the unmirrorable edge is not a paired edge then Player 1 can play the  mirror-reverse move. 

\bigskip
\noindent If the played edge is on a path with an odd number of edges and the unmirrorable edge is a paired edge then there is only one other move available on this path, namely, playing the edge paired to the edge just played. 

\bigskip
\noindent At some point in the game we might reach the opposite situation, and then the game continues with the following configuration:
        
\bigskip
\noindent  \textbf{I.2) End Game}  

\bigskip
\noindent If the prescribed conditions are no longer satisfied it is either impossible to direct the primary edges of any path from left to right, or it is impossible to direct the primary edges of any path from right to left. Without loss of generality, consider the the case when no pair of the primary edges may be directed from left to right. 
        
\bigskip
\noindent  The key fact to note regarding this case is that since no pair of primary edges may be or have been directed from left to right, it is impossible to create a cycle cell. So, all that is left to show is that there must be an even number of unmarkable edges overall.
        
\bigskip
\noindent Since this configuration is reached while Player 1 used the generalized mirror reverse strategy described above, this means that none of the primary edges are or could be directed from left to right. Therefore, the pair of primary edges on any path are either both directed from right to left, or at least one of the pair is unmarkable. 

However, not all of the primary edges may be directed from right to left, as this would create a sink at vertex $u$ and a source at vertex $v$. Therefore, we know that there is at least one unmarkable edge incident to vertex $u$ and at least one unmarkable edge incident to vertex $v$.  

Notice that for any given vertex, there may be only one unmarkable edge incident to that vertex, as if there are two undirected edges incident to the vertex, one of them can be directed without creating a sink or a source, simply meeting the needs of the other vertex incident that edge. Therefore, there is exactly one unmarkable edge incident to vertex $u$ and exactly one unmarkable edge incident to vertex $v$.
        
\bigskip
\noindent We know that all but two of the primary edges are directed from right to left. Using Lemma \ref{thm:unmarkable-edges-line}, any path that has both primary edges directed from right to left has an even number of unmarkable edges. 

\bigskip
\noindent The two unmarkable primary edges must have an implicit signal from the vertices $u$ and $v$ directing them from left to right. Whether these two primary edges are on the same path or on different paths, we may apply Lemma \ref{thm:unmarkable-edges-line} and Lemma \ref{thm:implicit-signals-same} to conclude there is an even number of unmarkable edges on this one, or on these two paths together. In either case, there is an even number of unmarkable edges overall.
        
\bigskip
\noindent  Therefore, the winning player is determined by the parity of the total number of edges.
        
\bigskip
\noindent  \textbf{Case II} \textbf{Even number of total edges} 
    
\bigskip
\noindent  In this case, Player 2 should follow the strategy described in Case I. The only different argument needed is that because there are an even number of edges overall, there must be an even number of paths with an odd number of edges, and therefore an even number of loner edges. So, if Player 2 plays a loner edge whenever Player 1 plays a loner edge, they will direct the last loner edge.

\end{proof}

\begin{corollary}\label{cor:p-paths-extended}
Consider a generalized Theta graph consisting of $p$ disjoint paths between two vertices $u$ and $v$ with the property that every path with an odd number of edges between $u$ and $v$ has at least 5 edges, except for graphs that have an odd number of total edges, there may be a single path of length 3 or a single path of length 1. For such a graph, Player 1 has a winning strategy when there is an odd number of total edges and Player 2 has a winning strategy when there is an even number of total edges.

\end{corollary}

\begin{proof}
If there is a short odd path then Player 1's first move will be on the central, or only edge of the short odd path path. From this point, they may follow the strategy as described in the proof above. 
\end{proof}

\subsection{Issue of Short Odd Paths}

There is an issue with having many short paths with odd number of vertices because when Player 1 does not have a chance to play these unmirrorable edges early on, and uses the above strategy, Player 2 may exploit their plans.

\bigskip
\noindent For example, consider the situation where a path with an even number of edges and a path of one edge could potentially be directed to result in a a cycle cell. If Player 1 does not have the chance to direct the single edge beforehand, Player 2 may play the edges on the path with even edges. Since Player 1's response is to play the mirror-reverse of these moves, Player 1 will complete the path, and then Player 2 may direct the path with one edge to complete the cycle cell.

\bigskip
\noindent For paths with three edges, there is again an issue if Player 1 cannot make sure they play the unmirrorable edge early on. If not, Player 2 may play one of the primary edges of the path with three edges. This has two potentially negative outcomes. If Player 1 leaves this path the way it is, Player 2 could later direct the other primary edge such that there is an odd number of unmarkable edges on the path. If Player 1 plays the mirror-reverse move, there could be a similar situation as described with the path with one edge.

\section{Specific Theta Graphs}

Motivated by the heptagon-pentagon graph mentioned previously, we are going to consider $\Theta_{j,2,k}$ graphs. These graphs could also be considered two polygons that share 2 consecutive sides. We structure the graph as it is structured in Figure \ref{fig:j-2-k-primary-edges}, and say the upper path has $j$ edges, the inner path has $2$ edges, and the lower path has $k$ edges. Note that only one of $j \text{ and } k$ can be equal to 1, as otherwise gameboard would include multiple edges, and would not be a simple graph. The example game we played earlier used $j = k = 2$. [See Figure \ref{fig:sample-gameboard}.] The unsolved graph of a heptagon and a pentagon that share two sides is the case when $j = 3$ and $k = 5$.

\bigskip
\noindent The vertices of interest in these $\Theta_{j,2,k}$ graphs are the degree-3 vertices. Ultimately, the way the primary edges are directed will determine the parity of the number of unmarkable edges in the graph, so we are interested in ways we may control these primary edges, seen in Figure \ref{fig:j-2-k-primary-edges}. We label these primary edges so we may reference them later on.

\begin{figure}
    \centering
    \begin{tikzpicture}
    \draw (-2.,4.)--(-2.,0.);
    \draw (-2.,2.)--(2.,2.);
    \draw (2.,4.)--(2.,0.);
    \draw [dashed] (-2, 4)--(2,4);
    \draw [dashed] (-2, 0)--(2,0);
    \draw [fill=black] (-2.,0.) circle (3.5pt);
    \draw [fill=black] (2.,0.) circle (3.5pt);
    \draw [fill=black] (-2.,4.) circle (3.5pt);
    \draw [fill=black] (2.,4.) circle (3.5pt);
    \draw [fill=black] (-2.,2.) circle (3.5pt);
    \draw [fill=black] (2.,2.) circle (3.5pt);
    \draw [fill=black] (0.,2.) circle (3.5pt);
    \draw (-2.3,3) node[] {1};
    \draw (-2.3,1) node[] {2};
    \draw (-1,2.3) node[] {3};
    \draw (1,2.3) node[] {4};
    \draw (2.3,1) node[] {5};
    \draw (2.3,3) node[] {6};
    \end{tikzpicture}
    \caption{Primary edges and structure of $\Theta_{j,2,k}$ gameboard}
    \label{fig:j-2-k-primary-edges}
\end{figure}
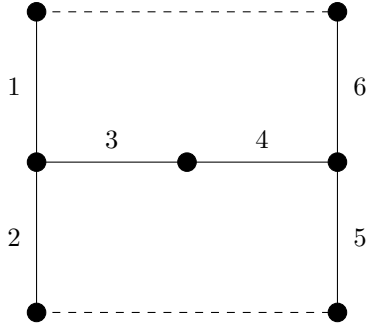

\subsection{Winning Configurations}

Our strategy consists of finding a group of configurations with the following properties: 

\begin{itemize}
    \item Reaching the given configuration guarantees an even number of unmarkable edges
    \item It is not possible to complete both cycle cells in the graph
    \item Both players can reach at least one of the given configurations regardless of the other player's moves
\end{itemize}

Once we found such a group of configurations we can conclude that Player 1 will win if the total number of edges is odd, and Player 2 will win if the total number of edges is even.

\bigskip
\noindent After investigating the possible directions that these primary edges could be labelled (as well as the possibility that they become unmarkable edges) we identified a few cases that always guarantee an even number of unmarkable edges. These cases are shown in Figure \ref{fig:guaranteed-even-unmarkable-edges}. There are more cases which also meet this requirement, but they are equivalent via rotational, horizontal, or vertical symmetry.

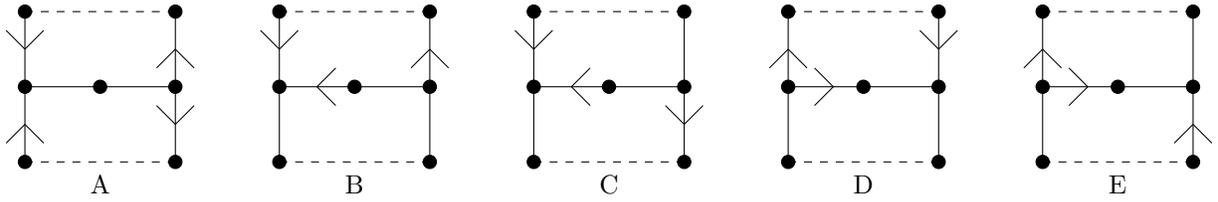
\begin{figure}
    \centering
    \begin{tikzpicture}
    \draw (-1,-1)--(-1,1);
    \draw (-1,0)--(1,0);
    \draw (1,1)--(1,-1);
    \draw [dashed] (-1,1)--(1,1);
    \draw [dashed] (-1,-1)--(1,-1);
    \draw [fill=black] (-1,1) circle (2.5pt);
    \draw [fill=black] (-1,0) circle (2.5pt);
    \draw [fill=black] (-1,-1) circle (2.5pt);
    \draw [fill=black] (0,0) circle (2.5pt);
    \draw [fill=black] (1,-1) circle (2.5pt);
    \draw [fill=black] (1,0) circle (2.5pt);
    \draw [fill=black] (1,1) circle (2.5pt);
    \draw (0,-1.3) node[] {A};
    
    \draw (-1,0.5)--(-1.25,0.75);
    \draw (-1,0.5)--(-0.75,0.75);
    
    \draw (-1,-0.5)--(-1.25,-0.75);
    \draw (-1,-0.5)--(-0.75,-0.75);
    
    \draw (1,0.5)--(0.75,0.25);
    \draw (1,0.5)--(1.25,0.25);
    
    \draw (1,-0.5)--(0.75,-0.25);
    \draw (1,-0.5)--(1.25,-0.25);
    \end{tikzpicture}
    \hspace{0.25in}
    \begin{tikzpicture}
    \draw (-1,-1)--(-1,1);
    \draw (-1,0)--(1,0);
    \draw (1,1)--(1,-1);
    \draw [dashed] (-1,1)--(1,1);
    \draw [dashed] (-1,-1)--(1,-1);
    \draw [fill=black] (-1,1) circle (2.5pt);
    \draw [fill=black] (-1,0) circle (2.5pt);
    \draw [fill=black] (-1,-1) circle (2.5pt);
    \draw [fill=black] (0,0) circle (2.5pt);
    \draw [fill=black] (1,-1) circle (2.5pt);
    \draw [fill=black] (1,0) circle (2.5pt);
    \draw [fill=black] (1,1) circle (2.5pt);
    \draw (0,-1.3) node[] {B};
    
    \draw (-1,0.5)--(-1.25,0.75);
    \draw (-1,0.5)--(-0.75,0.75);
    
    \draw (1,0.5)--(0.75,0.25);
    \draw (1,0.5)--(1.25,0.25);
    
    \draw (-0.5,0)--(-0.25,0.25);
    \draw (-0.5,0)--(-0.25,-0.25);
    \end{tikzpicture}
    \hspace{0.25in}
    \begin{tikzpicture}
    \draw (-1,-1)--(-1,1);
    \draw (-1,0)--(1,0);
    \draw (1,1)--(1,-1);
    \draw [dashed] (-1,1)--(1,1);
    \draw [dashed] (-1,-1)--(1,-1);
    \draw [fill=black] (-1,1) circle (2.5pt);
    \draw [fill=black] (-1,0) circle (2.5pt);
    \draw [fill=black] (-1,-1) circle (2.5pt);
    \draw [fill=black] (0,0) circle (2.5pt);
    \draw [fill=black] (1,-1) circle (2.5pt);
    \draw [fill=black] (1,0) circle (2.5pt);
    \draw [fill=black] (1,1) circle (2.5pt);
    \draw (0,-1.3) node[] {C};
    
    \draw (-1,0.5)--(-1.25,0.75);
    \draw (-1,0.5)--(-0.75,0.75);
    
    \draw (1,-0.5)--(0.75,-0.25);
    \draw (1,-0.5)--(1.25,-0.25);
    
    \draw (-0.5,0)--(-0.25,0.25);
    \draw (-0.5,0)--(-0.25,-0.25);
    \end{tikzpicture}
    \hspace{0.25in}
    \begin{tikzpicture}
    \draw (-1,-1)--(-1,1);
    \draw (-1,0)--(1,0);
    \draw (1,1)--(1,-1);
    \draw [dashed] (-1,1)--(1,1);
    \draw [dashed] (-1,-1)--(1,-1);
    \draw [fill=black] (-1,1) circle (2.5pt);
    \draw [fill=black] (-1,0) circle (2.5pt);
    \draw [fill=black] (-1,-1) circle (2.5pt);
    \draw [fill=black] (0,0) circle (2.5pt);
    \draw [fill=black] (1,-1) circle (2.5pt);
    \draw [fill=black] (1,0) circle (2.5pt);
    \draw [fill=black] (1,1) circle (2.5pt);
    \draw (0,-1.3) node[] {D};
    
    \draw (-1,0.5)--(-1.25,0.25);
    \draw (-1,0.5)--(-0.75,0.25);
    
    \draw (1,0.5)--(0.75,0.75);
    \draw (1,0.5)--(1.25,0.75);
    
    \draw (-0.4,0)--(-0.65,0.25);
    \draw (-0.4,0)--(-0.65,-0.25);
    \end{tikzpicture}
    \hspace{0.25in}
    \begin{tikzpicture}
    \draw (-1,-1)--(-1,1);
    \draw (-1,0)--(1,0);
    \draw (1,1)--(1,-1);
    \draw [dashed] (-1,1)--(1,1);
    \draw [dashed] (-1,-1)--(1,-1);
    \draw [fill=black] (-1,1) circle (2.5pt);
    \draw [fill=black] (-1,0) circle (2.5pt);
    \draw [fill=black] (-1,-1) circle (2.5pt);
    \draw [fill=black] (0,0) circle (2.5pt);
    \draw [fill=black] (1,-1) circle (2.5pt);
    \draw [fill=black] (1,0) circle (2.5pt);
    \draw [fill=black] (1,1) circle (2.5pt);
    \draw (0,-1.3) node[] {E};
    
    \draw (-1,0.5)--(-1.25,0.25);
    \draw (-1,0.5)--(-0.75,0.25);
    
    \draw (1,-0.5)--(0.75,-0.75);
    \draw (1,-0.5)--(1.25,-0.75);
    
    \draw (-0.4,0)--(-0.65,0.25);
    \draw (-0.4,0)--(-0.65,-0.25);
    \end{tikzpicture}
    \caption{Cases of $\Theta_{j,2,k}$ graphs that guarantee an even number of unmarkable edges}
    \label{fig:guaranteed-even-unmarkable-edges}
\end{figure}
 
\begin{lemma}\label{thm:proof-of-even-cases}
Each of the five configurations of Figure \ref{fig:guaranteed-even-unmarkable-edges} results in an even number of unmarkable edges.
\end{lemma}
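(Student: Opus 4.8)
The plan is to analyze each of the five cases in Figure \ref{fig:guaranteed-even-unmarkable-edges} by decomposing the graph into its three constituent paths (upper, inner, lower) between the two degree-3 vertices, and computing the total parity of unmarkable edges as the sum of the parities contributed by each path. Since the $j-2-k$ graph is built from exactly these three paths joined at the two vertices of interest, and since the direction of each primary edge (edges 1 through 6) determines the interaction type on each path, the total number of unmarkable edges is the sum of the unmarkable edges on each of the three paths, plus possibly the primary edges themselves when they are unmarkable (carry an implicit signal). The key tool is Lemma \ref{thm:implicit-signals-same}, which lets me treat implicit and explicit signals uniformly, so that for each path I only need to read off its interaction $\{a,b\}$ from the two incident primary edges and apply Lemma \ref{thm:unmarkable-edges-line}.

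First I would set up the bookkeeping. For each degree-3 vertex, each path has one end whose signal is given by the primary edge at that vertex; I record each of the three paths' interactions as a pair of signals drawn from the edge labels. Concretely, the upper path is bounded by edges 1 and 6, the inner path by edges 3 and 4, and the lower path by edges 2 and 5. Using the convention that $-1$ is a signal directed away from a degree-3 vertex and $1$ is directed toward it, I would translate the arrow diagram of each case into a signal pair per path, being careful that an \emph{outgoing} arrow at the left vertex and the \emph{same} physical arrow contribute signals measured relative to the two different vertices it touches consistently with the definitions. Then $p_{\text{upper}} = 0$ iff the upper interaction is $\{1,-1\}$ and $1$ otherwise, and similarly for the inner and lower paths. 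The total parity is $\bigl(p_{\text{upper}} + p_{\text{inner}} + p_{\text{lower}}\bigr) \bmod 2$, and I would verify this equals $0$ in all five cases.

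The one subtlety requiring care is the inner path, which has only two edges, and the possibility that a primary edge is itself unmarkable and thus already counts as an unmarkable edge. In cases B, C, D, and E one of the inner-adjacent primary edges carries an implicit signal (it is undirected but forced), so I must confirm two things: that Lemma \ref{thm:implicit-signals-same} genuinely applies at the relevant lengths (the excerpt notes it also holds for lengths one through three, which covers the inner path of length two), and that the unmarkable primary edge is counted exactly once when I sum across paths rather than double-counted or omitted. I would handle this by first fixing a convention that each unmarkable primary edge is bundled into exactly one of the three paths, then applying the lemma to each bundled path so that its parity already accounts for that edge.

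The main obstacle I anticipate is purely one of careful sign-tracking and symmetry reduction rather than deep difficulty: I must correctly read the orientation of every arrow in each of the five small diagrams, assign the interaction type to each of the three paths without a sign error, and confirm the three parities sum to an even total. Because a single flipped signal would flip a path's parity and break the argument, the verification must be done meticulously and case by case; the consistency of the $-1$/$1$ convention across the two endpoints of a shared path is exactly where errors creep in. The payoff is that once all five interaction triples are tabulated, the conclusion $p \equiv 0 \pmod 2$ follows immediately from Lemma \ref{thm:unmarkable-edges-line} together with Lemma \ref{thm:implicit-signals-same}, with no further machinery needed.
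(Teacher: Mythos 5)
Your proposal follows essentially the same route as the paper's proof: invoke Lemma \ref{thm:implicit-signals-same} to treat implicit and explicit signals interchangeably, partition the board into the three paths (so each unmarkable primary edge is counted exactly once, inside its own path), and apply Lemma \ref{thm:unmarkable-edges-line} path by path to check that the total parity of unmarkable edges is even in each of the five cases. The one point your single-triple tabulation must absorb (and which the paper handles by explicit subcases) is that in cases B--E the diagram leaves one primary edge (edge 5 or 6) genuinely undetermined, so each case requires checking both of its possible eventual signals; the argument still closes because flipping that signal flips the inner-path parity and the corresponding outer-path parity together, leaving the total even.
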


\begin{proof}
Because of Lemma \ref{thm:implicit-signals-same}, as long as we know the signal that an edge will have in an interaction, we can predict the parity of unmarkable edges in  the given path. So, we need not worry in this proof if a signal is implicit or explicit. Throughout this proof we will use Lemma \ref{thm:unmarkable-edges-line} repeatedly without explicitly referencing it.

\bigskip
\noindent \textbf{Case A:} We know that each of the top and bottom paths must have an even number of unmarkable edges. Also, both of the edges on the middle path must be directed the same direction. So, overall there must be an even number of unmarkable edges.

\bigskip
\noindent \textbf{Case B:} Here the top path must have an even number of unmarkable edges and edge 2 must have a signal of -1. If edge 5 has a signal of 1, then the bottom path must have an even number of unmarkable edges. Then, edge 4 is not unmarkable. So, there are an even number of unmarkable edges overall. Instead, if edge 5 has a signal of -1, the number of unmarkable edges on the bottom path would be odd and edge 4 would become unmarkable, so, there would be an even number of unmarkable edges overall. 

\bigskip
\noindent \textbf{Case C:} In this case edge 2 will have a signal of -1, and the bottom path has an odd number of unmarkable edges. If edge 6 has a signal of -1, then the top path will have an even number of unmarkable edges and edge 4 would become unmarkable. So, there are an even number of unmarkable edges overall. Instead, if edge 6 has a signal of 1, the top path will have an odd number of unmarkable edges, and edge 4 would not be unmarkable. So, there would be an even number of unmarkable edges overall. 

\bigskip
\noindent \textbf{Case D:} The top path will have an even number of unmarkable edges and edge 2 must have a a signal of 1. If edge 5 has a signal of -1, then the bottom path will have an even number of unmarkable edges, and edge 4 is not unmarkable. So, there are an even number of unmarkable edges overall. If edge 5 has a signal of 1, then the bottom path will have an odd number of unmarkable edges, and edge 4 would become unmarkable. So, there would be an even number of unmarkable edges overall. 

\bigskip
\noindent \textbf{Case E:} Edge 2 will have a signal of 1, so the bottom path will have an odd number of unmarkable edges. If edge 6 has a signal of 1, then the top path will have an even number of unmarkable edges and edge 4 would become unmarkable. So, there are an even number of unmarkable edges overall. If edge 5 has a signal of -1, then the top path will have an odd number of unmarkable edges, and edge 4 would not be unmarkable. So, there would be an even number of unmarkable edges overall. 

\end{proof}

\bigskip
\noindent By showing that these cases will always have an even number of unmarkable edges, we prepare ourselves for a winning strategy for Player 1 when there is an odd number of total edges, and a winning strategy for Player 2 when there is an even number of total edges. However, this is not sufficient if both the upper polygon and the lower polygon can make a cycle cell based on our knowledge of the primary edges. Specifically, guaranteeing that a player can force Case A of Figure \ref{fig:guaranteed-even-unmarkable-edges} does not mean they can guarantee a victory. Consider the partially completed game shown in Figure \ref{fig:case-a-fails}. Despite the fact that it is Player 2's turn, there is an even number of total edges, and Player 2 has guaranteed an even number of unmarkable edges with Case A, all legal moves are losing moves. 

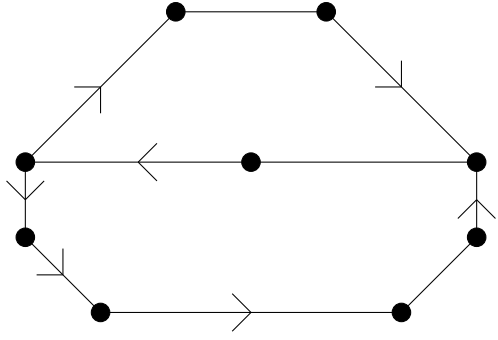
\begin{figure}
    \centering
    \begin{tikzpicture}
    \draw (-3,2)--(3,2);
    \draw (-3,2)--(-1,4);
    \draw (3,2)--(1,4);
    \draw (1,4)--(-1,4);
    \draw (-3,2)--(-3,1);
    \draw (-3,1)--(-2,0);
    \draw (-2,0)--(2,0);
    \draw (2,0)--(3,1);
    \draw (3,1)--(3,2);
    \draw (-2,3)--(-2.35,3);
    \draw (-2,3)--(-2,2.65);
    \draw (-3,1.5)--(-3.25,1.75);
    \draw (-3,1.5)--(-2.75,1.75);
    \draw (-2.5,0.5)--(-2.5,0.85);
    \draw (-2.5,0.5)--(-2.85,0.5);
    \draw (0,0)--(-0.25,0.25);
    \draw (0,0)--(-0.25,-0.25);
    \draw (3,1.5)--(3.25,1.25);
    \draw (3,1.5)--(2.75,1.25);
    \draw (-1.5,2)--(-1.25,2.25);
    \draw (-1.5,2)--(-1.25,1.75);
    \draw (2,3)--(1.65,3);
    \draw (2,3)--(2,3.35);
    \draw [fill=black] (-3.,2.) circle (3.5pt);
    \draw [fill=black] (0.,2.) circle (3.5pt);
    \draw [fill=black] (3.,2.) circle (3.5pt);
    \draw [fill=black] (1.,4.) circle (3.5pt);
    \draw [fill=black] (-1.,4.) circle (3.5pt);
    \draw [fill=black] (-3.,1.) circle (3.5pt);
    \draw [fill=black] (-2.,0.) circle (3.5pt);
    \draw [fill=black] (2.,0.) circle (3.5pt);
    \draw [fill=black] (3.,1.) circle (3.5pt);
    \end{tikzpicture}
    \caption{Despite reaching Case A and there being an even number of edges, Player 2 is in a losing position}
    \label{fig:case-a-fails}
\end{figure}

\bigskip
\noindent Therefore, in order to show that Player 1 will have a winning strategy for $\Theta_{j,2,k}$ graphs with odd edges and Player 2 will have a winning strategy for $\Theta_{j,2,k}$ graphs with even edges, we will need to use Cases B through E, or use case A while proving that only one cycle cell is possible. With these conditions only one of the polygons could potentially become a cycle cell, so by guaranteeing that a player can make the last move, they also guarantee they will be the player to complete the cycle cell.

\bigskip
\noindent What is it about these structures that guarantees an even number of unmarkable edges? It is the fact that all of these structures have an \textit{almost-source} and an \textit{almost-sink} at the two degree-3 vertices. An almost-source is a vertex in which all but one of the edges incident with the vertex are directed outward, and an almost-sink is a vertex in which all but one of the edges incident with the vertex are directed inward. It appears that only one of these properties is true for the four cases B through E. However, the directed inner edge of these four cases actually has an impact on both of the degree-3 vertices, such that having both an almost-sink and almost-source is unavoidable.

\subsection{Winning Strategies}

\noindent Applying Corollary \ref{cor:p-paths-extended}, we have that if at most one of $j$ and $k$ is odd, then Player 1 has a winning strategy when $j + k$ is odd and Player 2 has a winning strategy when $j + k$ is even. If both $j$ and $k$ are odd, this result gives that Player 2 has a winning strategy when both paths have length greater than or equal to 5. In these specific Theta graphs, we are able to deal with these odd short paths.

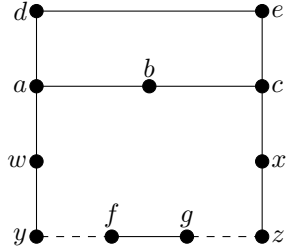
\begin{figure}
    \centering
    \begin{tikzpicture}
        \coordinate (D4) at (0,0);
        \coordinate (D3) at (1,0);
        \coordinate (D2) at (2,0);
        \coordinate (D1) at (3,0);
        \coordinate (C2) at (0,1);
        \coordinate (C1) at (3,1);
        \coordinate (B3) at (0,2);
        \coordinate (B2) at (1.5,2);
        \coordinate (B1) at (3,2);
        \coordinate (A2) at (0,3);
        \coordinate (A1) at (3,3);
        \foreach \i in {A1,A2,B1,B2,B3,C1,C2,D1,D2,D3,D4}
        \filldraw (\i) circle (2.5pt);
        \draw (A1)--(D1);
        \draw (A2)--(D4);
        \draw (D2)--(D3);
        \draw (B1)--(B3);
        \draw (A1)--(A2);
        \draw [dashed] (D1)--(D2);
        \draw [dashed] (D3)--(D4);
        \node[right] at (A1) {$e$};
        \node[left] at (A2) {$d$};
        \node[right] at (B1) {$c$};
        \node[above] at (B2) {$b$};
        \node[left] at (B3) {$a$};
        \node[right] at (C1) {$x$};
        \node[left] at (C2) {$w$};
        \node[right] at (D1) {$z$};
        \node[above] at (D2) {$g$};
        \node[above] at (D3) {$f$};
        \node[left] at (D4) {$y$};
    \end{tikzpicture}
    \caption{Structure for Theorem \ref{thm:3-2-k-case}}
    \label{fig:3-2-k-case}
\end{figure}

For the proof of Theorem \ref{thm:3-2-k-case} refer to Figure \ref{fig:3-2-k-case}.

\begin{theorem}\label{thm:3-2-k-case}
Given a $\Theta_{3,2,k}$ graph with $k \geq 1$ odd Player 2 has a winning strategy.
\end{theorem}

\begin{proof}
\bigskip
\noindent If Player 1 begins by directing an unmirrorable edge, Player 2 should respond by playing the other unmirrorable edge. From here, Player 2 can follow the strategy in the proof of Theorem \ref{thm:p-paths}.

\bigskip
\noindent If $k=1$ and Player 1 begins by directing one the middle edges, without loss of generality consider $a \to b$, then Player 2 should respond by playing $e \to c$. From here it can be exhaustively shown that Player 2 has a winning strategy. Similarly if Player 1 begins by directing a mirrorable primary edge, say $e \to c$, then Player 2 can respond with $a \to b$, and we arrive at the same case.

\bigskip
\noindent If $k > 1$ and Player 1 begins by directing one the middle edges, without loss of generality consider $a \to b$, then Player 2 should respond by playing $a \to d$. From here, Player 2 can use their next move to force one of the cases D or E, and therefore they have a winning strategy. Similarly if Player 1 begins by directing a mirrorable primary edge, say $a \to d$, then Player 2 can respond with $a \to b$, and we arrive at the same case.

\bigskip
\noindent If Player 1 plays on a mirrorable secondary edge, we know that $k \geq 5$. Without loss of generality, we may consider the move $w \to y$. Then, Player 2 should respond with the move $a \to b$. From here, Player 2 can use their next move to direct $x \to c$ or $e \to c$. Since $w \to y$ affects the direction of $\overline{aw}$, we know that this guarantees Case D or E.

\bigskip
\noindent If Player 1 plays on some other edge, we know that $k \geq 7$. From here, Player 2 should play the mirror-reverse move of Player 1's move. Player 2 should use this strategy until Player 1 plays on some edge from one of the other cases. From here, Player 2 should play the strategy given by that case.
\end{proof}

\begin{figure}
    \centering
    \begin{tikzpicture}
    \draw (-1,0)--(1,0);
    \draw (-1,0)--(-1,-1);
    \draw (1,0)--(1,-1);
    \draw (1,0) arc (0:180:1);
    \draw [dashed] (-1,-1)--(1,-1);
    \draw [fill=black] (0,0) circle (2.5pt);
    \draw [fill=black] (-1,0) circle (2.5pt);
    \draw [fill=black] (1,0) circle (2.5pt);
    \draw [fill=black] (-1,-0.5) circle (2.5pt);
    \draw [fill=black] (-1,-1) circle (2.5pt);
    \draw [fill=black] (1,-0.5) circle (2.5pt);
    \draw [fill=black] (1,-1) circle (2.5pt);
    \draw (-1.3,0) node[] {$a$};
    \draw (0,0.3) node[] {$b$};
    \draw (1.3,0) node[] {$c$};
    \draw (-1.3,-0.5) node[] {$d$};
    \draw (1.3,-0.5) node[] {$e$};
    \draw (-1.3,-1) node[] {$f$};
    \draw (1.3,-1) node[] {$g$};
    \end{tikzpicture}
    \caption{Structure for Theorem \ref{thm:1-2-k-case}}
    \label{fig:1-2-k-case}
\end{figure}
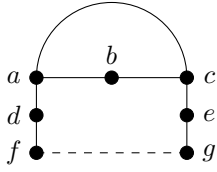

\begin{theorem}\label{thm:1-2-k-case}
Given a $\Theta_{1,2,k}$ graph with $k \geq 5$ odd Player 2 has a winning strategy.
\end{theorem}

\begin{proof}
\bigskip
\noindent If Player 1 begins by directing edge $\overline{ac}$. Then, Player 2 should direct either of $\overline{ab}$ or $\overline{bc}$ so that the 3-cycle cannot be completed. From this point, we have reached one of cases B-E of Figure \ref{fig:guaranteed-even-unmarkable-edges}, so there will be an even number of unmarkable edges and at most one cycle cell, so Player 2 has a winning strategy.

\bigskip
\noindent If Player 1 begins by directing one of $\overline{ab}$ or $\overline{bc}$, then Player 2 should direct $\overline{ac}$ so the triangle cycle cannot be completed. This is then identical to the case above.

\bigskip
\noindent If Player 1 begins by directing one of $\overline{ad}$ or $\overline{df}$, then direct $\overline{bc}$ so that both edges are directed toward vertex $a$ or both edges are directed away from vertex $a$. From here, Player 2 may force one of the cases B-E by directing $\overline{ac}$ accordingly on their next move, or completing a cycle cell if it is possible.

\bigskip
\noindent If Player 1 begins by directing one of $\overline{ce}$ or $\overline{eg}$, then direct $\overline{ab}$ so that both edges are directed toward vertex $c$ or both edges are directed away from vertex $c$. From here, Player 2 may force one of the cases B-E by directing $\overline{ac}$ accordingly on their next move, or completing a cycle cell if it is possible.

\bigskip
\noindent If Player 1 begins by directing any other edge, Player 2 should direct edge $\overline{ac}$. From here, Player 2 may force one of the cases B-E by directing one of $\overline{ab}$ or $\overline{bc}$ accordingly on their next move, or completing a cycle cell if it is possible.
\end{proof}

Combining the results from Corollary \ref{cor:p-paths-extended} and Theorems \ref{thm:3-2-k-case} and \ref{thm:1-2-k-case}, we arrive at the following result.

\begin{corollary}\label{cor:j-2-k-full}
Consider a $\Theta_{j,2,k}$-graph with $j \geq 1$ and $k \geq 2$. If $j+k$ is odd, Player 1 has a winning strategy. If $j+k$ is even, Player 2 has a winning strategy.
\end{corollary}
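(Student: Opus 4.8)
The plan is to derive this corollary as a bookkeeping step that stitches together the theorems already established, after first exploiting the symmetry of the board. Since the $j-2-k$ gameboard is isomorphic to the $k-2-j$ gameboard (reflect across the middle path), and the asserted winner depends only on the parity of $j+k$, which is symmetric in $j$ and $k$, I would assume without loss of generality that $j \le k$ and organize the argument by the value of $m := \min(j,k)$. In each case I would quote the matching earlier result and check that its conclusion, once rephrased, agrees with the stated $(j+k)$-parity rule.

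Every case except one family reduces immediately to a cited result. If $m \ge 4$ then $j,k \ge 4$ and Theorem~\ref{thm:easy-j-2-k-case} gives the conclusion in exactly the required form. If $m = 3$ then the board is a $3-2-k$ graph with $k \ge 3 \ge 2$, so Theorem~\ref{thm:3-2-k-case} applies; because $j+k = 3+k$ is odd exactly when $k$ is even, its statement ``Player~1 wins when $k$ is even'' is precisely ``Player~1 wins when $j+k$ is odd.'' If $m = 1$ then, since at most one side is $1$, the other side is $k \ge 2$: the subcases $1-2-2$ and $1-2-3$ are settled by the exhaustive checks already cited, with $j+k = 3,4$ giving Player~1 and Player~2 as the corollary demands, and for $k \ge 4$ Theorem~\ref{thm:1-2-k-case} applies with the same translation $j+k \equiv 1+k \pmod 2$. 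The board $2-2-2$ is handled by the worked example, and $2-2-3 \cong 3-2-2$ again falls under Theorem~\ref{thm:3-2-k-case}. In every one of these cases the parity bookkeeping lines up, so the corollary holds there.

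The one family the quoted theorems do not reach is $2-2-k$ with $k \ge 4$: here neither side is $1$ or $3$ and $m = 2 < 4$, so none of Theorems~\ref{thm:easy-j-2-k-case}, \ref{thm:3-2-k-case}, or \ref{thm:1-2-k-case} applies, and I expect supplying this case to be the main obstacle. I would handle it by a pairing argument modeled on Theorem~\ref{thm:3-2-k-case}, using the observation that a path of length two forces a single signal between the two degree-$3$ vertices in the same way the inner path does. Concretely, labeling the bottom edges $b_1,\dots,b_k$, I would pair the two inner edges together, the two top edges together, and $b_i$ with $b_{k+1-i}$; when $k$ is even this is a perfect pairing of the $k+4$ edges so Player~2 mirrors each move, and when $k$ is odd the central bottom edge is the unique unpaired edge and Player~1 opens with it. The delicate point, exactly as in the proof of Theorem~\ref{thm:3-2-k-case}, is the cycle-cell bookkeeping: I must verify that completing the second edge of a pair never gifts the opponent a cycle cell, and in particular that the upper $4$-cycle formed by the two $2$-paths can only be closed by the mirroring player. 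Carrying over the modification used in Theorem~\ref{thm:3-2-k-case} to forbid the opponent from closing a cell by playing a central edge last is where the real care lies; once that is in place the $2-2-k$ family conforms and the corollary follows for all $j \ge 1$, $k \ge 2$.
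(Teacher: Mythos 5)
Your assembly of the cited results is exactly how the paper justifies this corollary --- it offers no separate proof beyond the bookkeeping you describe --- but you have caught something genuine: the family $2-2-k$ with $k \geq 4$ (equivalently $j-2-2$ with $j \geq 4$) is covered by none of Theorems \ref{thm:easy-j-2-k-case}, \ref{thm:3-2-k-case}, \ref{thm:1-2-k-case}, nor by the small boards $1-2-2$, $1-2-3$, $2-2-2$, and the paper's text silently passes over it. Your parity translations in the other cases ($j+k \equiv 1+k$, $3+k \pmod 2$) are all correct, so the only substantive issue is the patch you supply, and it is sound --- in fact it is a special case of machinery the paper develops in the very next section. The $2-2-k$ board is precisely $p=3$ internally disjoint paths of lengths $2$, $2$, $k$ between the two degree-$3$ vertices, and since $k \geq 4$, every path with an odd number of edges has at least $5$ edges; thus Theorem \ref{p-paths} applies verbatim: $k$ even gives an even total of $k+4$ edges and Player 2, while $k$ odd forces $k \geq 5$, an odd total, and Player 1, matching the $(j+k)$-parity rule. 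Your pairing is the mirror-reverse strategy of that theorem in disguise: directing $b_{k+1-i}$ ``in the same direction'' as $b_i$ is exactly the reflect-then-reverse response, the central bottom edge is the unique loner (unmirrorable) edge when $k$ is odd, and because each of the two cells (top$\,\cup\,$middle and middle$\,\cup\,$bottom) is a union of your pairs, after each response every pair is complete or empty, so only the responding player can ever finish a cell --- which disposes of the delicate cycle-cell bookkeeping you flag. The $3-2-k$-style pathology you worry about stems from the doubly secondary middle edge of a length-$3$ path and cannot arise here, since the only odd path has length at least $5$; the residual scenario of a paired edge turning into a loner is handled by the switching device of the $p$-paths section. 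So your route is the paper's route plus a necessary repair the paper omits; the only stylistic improvement would be to cite Theorem \ref{p-paths} for the $2-2-k$, $k \geq 4$ family (or reorder so it precedes the corollary) rather than reproving it by a bespoke pairing.
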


Another way of stating this theorem is that the parity of edges determines the winner for any simple $\Theta_{j,2,k}-$graph.

\bigskip
\noindent Note that this result answers the question asked by Alvarado at al. in  \cite{alvarado2021game} about the gameboard of a heptagon and pentagon sharing two consecutive edges that motivated our research.

\section{Rectangular Grids}

In \cite{alvarado2021game}, Theorem 6 states that given a graph with an involution that leaves each cell nowhere-involutive or self-involutive, Player 1 has a winning strategy when there is exactly one edge fixed by the involution, and Player 2 has a winning strategy when no edge is fixed by the involution.

\bigskip
\noindent Consider a graph that is a grid of $b \times a$ vertices. That is, the height of the grid is $b-1$ edges and the width of the grid is $a-1$ edges. For such a graph, we are interested in the involution of 180 degree rotation since any other involution would fix multiple edges.

\bigskip
\noindent Based on Theorem 6 in \cite{alvarado2021game}, Player 2 will win when $a$ and $b$ have the same parity. However, when $a$ and $b$ have opposite parity, the theorem does not apply. In this case there is exactly one self-involutive edge, but this makes two cells part-involutive. We are going to settle this case below.

\bigskip
\noindent We consider the grid game board to be the first quadrant with vertices placed at the integer coordinates of the coordinate plane with the leftmost path as the $y$-axis and the bottom most path as the $x$-axis. Then the left bottom vertex may be labelled $(0,0)$, and the top right vertex may be labelled $(a,b)$. We will identify a vertex with its label. In the involution of rotating this graph by 180 degrees the center of rotation is the midpoint of an edge, and the image of vertex $(i,j)$ is $(a-i,b-j)$. 

\bigskip
\noindent Player $W$ will use the rotate-reverse strategy. This strategy involves rotating the other player's most recent move 180 degrees and then reversing the direction. Consider the move $(i_1, j_1) \rightarrow (i_2, j_2)$. Rotating this move 180 degrees gives the move $(a - i_1, b - j_1) \rightarrow (a - i_2, b - j_2)$. Then, reversing this move gives $(a - i_2, b - j_2) \rightarrow (a - i_1, b - j_1)$.

\bigskip
\noindent We will denote the image of vertex $v$ by $v^{\prime}$. In Figure \ref{fig:rotate-reverse-move}, the bottom left vertex is $(0,0)$ and the top right vertex is $(3,2)$. The move $x \rightarrow y$ has the rotate-reverse move $y^{\prime} \rightarrow x^{\prime}$. That is, the rotate-reverse of the move $(0,0) \rightarrow (1,0)$ is $(2,2) \rightarrow (3,2)$.

\begin{figure}
    \centering
    \begin{tikzpicture}
    \draw [fill=black] (0,0) circle (2.5pt);
    \draw [fill=black] (0,1) circle (2.5pt);
    \draw [fill=black] (0,2) circle (2.5pt);
    \draw [fill=black] (1,0) circle (2.5pt);
    \draw [fill=black] (1,1) circle (2.5pt);
    \draw [fill=black] (1,2) circle (2.5pt);
    \draw [fill=black] (2,0) circle (2.5pt);
    \draw [fill=black] (2,1) circle (2.5pt);
    \draw [fill=black] (2,2) circle (2.5pt);
    \draw [fill=black] (3,0) circle (2.5pt);
    \draw [fill=black] (3,1) circle (2.5pt);
    \draw [fill=black] (3,2) circle (2.5pt);
    \draw (0,0)--(0,2);
    \draw (1,0)--(1,2);
    \draw (2,0)--(2,2);
    \draw (3,0)--(3,2);
    \draw (0,0)--(3,0);
    \draw (0,1)--(3,1);
    \draw (0,2)--(3,2);
    \draw (0,-0.3) node[] {$x$};
    \draw (1,-0.3) node[] {$y$};
    \draw (3,2.3) node[] {$x^{\prime}$};
    \draw (2,2.3) node[] {$y^{\prime}$};
    \draw (0.5,0)--(0.3,0.2);
    \draw (0.5,0)--(0.3,-0.2);
    \draw (2.5,2)--(2.3,2.2);
    \draw (2.5,2)--(2.3,1.8);
    \end{tikzpicture}
    \caption{Example of Rotate-Reverse Move}
    \label{fig:rotate-reverse-move}
\end{figure}
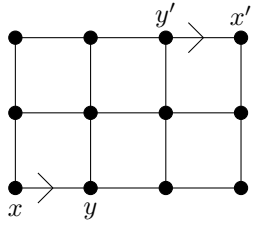

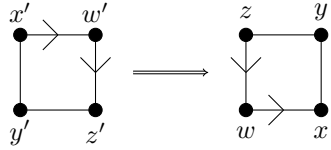
\begin{figure}
    \centering
    \begin{tikzpicture}
    \draw [fill=black] (0,0) circle (2.5pt);
    \draw [fill=black] (0,1) circle (2.5pt);
    \draw [fill=black] (1,0) circle (2.5pt);
    \draw [fill=black] (1,1) circle (2.5pt);
    \draw (0,-0.3) node[] {$y^{\prime}$};
    \draw (1,-0.3) node[] {$z^{\prime}$};
    \draw (0,1.3) node[] {$x^{\prime}$};
    \draw (1,1.3) node[] {$w^{\prime}$};
    \draw (0,0)--(1,0);
    \draw (1,0)--(1,1);
    \draw (1,1)--(0,1);
    \draw (0,1)--(0,0);
    \draw (0.5,1)--(0.3,1.2);
    \draw (0.5,1)--(0.3,0.8);
    \draw (1,0.5)--(1.2,0.7);
    \draw (1,0.5)--(0.8,0.7);
    \draw [-{implies},double] (1.5,0.5) -- (2.5,0.5);
    \draw [fill=black] (3,0) circle (2.5pt);
    \draw [fill=black] (4,0) circle (2.5pt);
    \draw [fill=black] (3,1) circle (2.5pt);
    \draw [fill=black] (4,1) circle (2.5pt);
    \draw (3,-0.3) node[] {$w$};
    \draw (4,-0.3) node[] {$x$};
    \draw (3,1.3) node[] {$z$};
    \draw (4,1.3) node[] {$y$};
    \draw (3,0)--(4,0);
    \draw (4,0)--(4,1);
    \draw (4,1)--(3,1);
    \draw (3,1)--(3,0);
    \draw (3.5,0)--(3.3,0.2);
    \draw (3.5,0)--(3.3,-0.2);
    \draw (3,0.5)--(3.2,0.7);
    \draw (3,0.5)--(2.8,0.7);
    \end{tikzpicture}
    \caption{Player 1 will only make a cycle available if Player 2 makes a cycle available}
    \label{fig:grid-cycle-image}
\end{figure}

\begin{theorem}\label{thm:grids}
Consider a graph that is a grid of $a \times b$ vertices, where exactly one of $a$ or $b$ is odd. Player 1 has a winning strategy for this gameboard.
\end{theorem}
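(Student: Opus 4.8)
The plan is to have Player~1 open on the unique self-involutive edge and thereafter answer with the \emph{rotate-reverse} move of Figure~\ref{fig:rotate-reverse-move}, deviating only to complete a cycle cell the instant one becomes available. Assume without loss of generality that $b$ is odd and $a$ is even, so that the centre of the $180^\circ$ rotation is the midpoint of a single (horizontal) edge $f$, the unique self-involutive edge, and note that the total number of edges is congruent to $a+b$ modulo $2$ and is therefore odd; this is exactly the parity regime in which Player~1 should win. Write $\sigma$ for the rotate-reverse operation on directed edges (rotate $180^\circ$, then reverse the arrow). First I would record the algebra of $\sigma$: it is an involution, $\sigma^2=\mathrm{id}$; it fixes $f$ once $f$ is directed, in either direction; the underlying rotation $(i,j)\mapsto(b-i,a-j)$ has no fixed vertex, since $b/2$ is not an integer; and consequently if $\rho(e)$ shares a vertex with a non-central edge $e$ then $\rho(e)=e$ is forced, so every non-central edge is sent to a distinct, \emph{vertex-disjoint} partner $\sigma(e)$.

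Player~1's first move directs $f$. The invariant to maintain is that after every Player~1 move the position is $\sigma$-symmetric. Granting the invariant, when Player~2 directs a non-central edge $e$ as $d$, the partner $\sigma(e)$ is still undirected (it is vertex-disjoint from $e$ and was undirected by symmetry), and directing it as $\sigma(e,d)$ is legal: the local configuration around either endpoint $w$ of $\sigma(e)$ after Player~1's reply is exactly the $\sigma$-image of the configuration around the matching endpoint of $e$ after Player~2's move, and since $\sigma$ reverses all arrows it converts the ``neither sink nor source'' condition guaranteed by Player~2's legal move into the same condition at $w$. Thus Player~1 always has a legal reply and hence makes the last move of the game, so if no cycle cell is ever formed Player~1 wins. (I would also note the convenient fact that \emph{completing} a cycle cell is always legal: the three oriented edges of the square already supply one incoming and one outgoing arrow at each endpoint of the completing edge, so no sink or source can be created.)

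The substantive obstacle is that, unlike the clean hypotheses of Alvarado et~al.'s Theorem~6 \cite{alvarado2021game}, here the two cells bordering $f$ are only part-involutive (they are swapped by $\sigma$ rather than fixed or avoided), so I must rule out Player~2 ever being the first to complete a cycle cell; this is the content illustrated by Figure~\ref{fig:grid-cycle-image}. Amend the strategy so that Player~1 completes a cycle cell whenever one is available (winning immediately) and otherwise plays $\sigma$. Suppose toward a contradiction that Player~2 completes the first cycle cell $D$, by a move directing $e$ from the $\sigma$-symmetric position $P$ reached after Player~1's previous reply $\sigma(e')$. By symmetry $\sigma(D)$ is also a completable cycle cell, with completing edge $\sigma(e)$. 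The key geometric fact is that in a grid every chordless cycle is a unit square, and two distinct unit squares share at most one edge; hence $D=\sigma(D)$ would force $\sigma(e)=e$, i.e.\ $e=f$, impossible since $f$ was directed on move one, so $D\neq\sigma(D)$. Now trace back one half-move to the position $Q=P\setminus\{\sigma(e')\}$ in which it was Player~1's turn: removing $\sigma(e')$ leaves $D$ completable unless $\sigma(e')\in D$, and leaves $\sigma(D)$ completable unless $e'\in D$, so to prevent Player~1 from having already completed a cycle cell and won in $Q$ we would need $D$ to contain both $e'$ and $\sigma(e')$, which once more forces the distinct squares $D$ and $\sigma(D)$ to share two edges --- a contradiction.

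Assembling the pieces, the first cycle cell ever formed (if any) is necessarily completed by Player~1, and otherwise Player~1 makes the last move; either way Player~1 wins, and the transposed case ($a$ odd, $b$ even) is identical. I expect the genuine difficulty to lie precisely in the cycle-cell bookkeeping of the third paragraph --- making the trace-back airtight, including the degenerate adjacencies among $e$, $\sigma(e)$, and the edges of $D$, and confirming that chordless cycles in a grid really are only unit squares --- whereas the symmetry, legality, and last-move arguments are routine once the vertex-disjointness of $e$ and $\sigma(e)$ is in hand.
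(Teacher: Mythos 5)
Your proposal follows the paper's proof essentially move for move: open on the unique self-involutive edge, answer every Player 2 move with the rotate-reverse move $\sigma$, deviate only to complete a cycle cell, and verify the three obligations (the reply is available, legal, and never hands Player 2 a completion). The one substantive difference is in the last step: the paper argues directly that if Player 1's reply $y^{\prime} \rightarrow x^{\prime}$ would leave a cell completable for Player 2, then by the maintained symmetry Player 2's own move $x \rightarrow y$ has already left the rotated cell completable, so Player 1 completes it and wins first; your trace-back contradiction (Player 2's completed cell $D$ would have to contain both $e^{\prime}$ and $\sigma(e^{\prime})$, forcing the distinct unit squares $D$ and $\sigma(D)$ to share two edges) is the contrapositive of that same symmetry argument, and is in fact tighter --- the paper's printed version has its arrow directions inconsistent between the text and Figure \ref{fig:grid-cycle-image}, whereas your bookkeeping, including the vertex-disjointness of $e$ and $\sigma(e)$ for non-central edges and the observation that completing a cell is always legal, is airtight. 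One caveat: your stated ``key geometric fact'' that in a grid every chordless cycle is a unit square is false as a statement about the abstract graph --- the boundary of a $2\times 2$ block of cells is a chordless $8$-cycle --- but this is immaterial, because in the Game of Cycles the cells are the bounded faces of the drawn board (the ``chordless cycle'' gloss in this paper's abstract is imprecise, and its own grid proof implicitly uses the face definition), and for a grid those faces are exactly the unit squares, which is all your argument actually needs; you should replace that justification accordingly.
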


\begin{proof}

Because one of the dimensions of the grid is odd, there is exactly one self-involutive edge under the involution of 180 degree rotation. Player 1 starts by playing this self-involutive edge. That is, if the move is $u \rightarrow v$, then $u = v^{\prime}$ and $v = u^{\prime}$.

\bigskip
\noindent From here, whenever Player 2 plays $x \rightarrow y$, Player 1 should respond with $x^{\prime} \rightarrow y^{\prime}$, unless it is possible to complete a cycle, in which case Player 1 will do that instead.

\bigskip
\noindent Assume that up until a certain point in the game, Player 1 has used this strategy. Then, we will show that Player 1's move must be available, legal, and it will not create a winning move for Player 2.

\bigskip
\noindent Player 1's move must be available. If the move $y^{\prime} \rightarrow x^{\prime}$ had already been played, then there are two cases. Either Player 1 played it in response to $x \rightarrow y$ having been played earlier, in which case, Player 2 could not have played $x \rightarrow y$ on the most recent move. Or, if Player 2 played $y^{\prime} \rightarrow x^{\prime}$, then Player 1 would have played $x \rightarrow y$ in response, which means Player 2 couldn't have played $x \rightarrow y$ on the most recent move.

\bigskip
\noindent Player 1's move must be legal, as if the move $y^{\prime} \rightarrow x^{\prime}$ creates a sink or a source, then the previous move $x \rightarrow y$ by Player 2 must have created a sink or a source as well. 

\bigskip
\noindent Consider a fixed $y$ vertex. Then, whenever Player 2 plays $x \rightarrow y$ and Player 1 plays $y^{\prime} \rightarrow x^{\prime}$ in response, there is an edge directed towards $y$ and an edge directed away from $y^{\prime}$. Therefore, Player 1 can only create a source by playing $y^{\prime} \rightarrow x^{\prime}$ if Player 2 has created a sink by playing $x \rightarrow y$. 

\bigskip
\noindent Therefore, Player 1's move must be legal. Since Player 1's move must always be legal and available, Player 1 will always make the last move if there are no cycle cells made. We next show that Player 1 will always have a winning move available before Player 2 does.

\bigskip
\noindent Player 1's move will not create a winning move for Player 2, because if the rotate-reverse move would create a winning move for Player 2, then Player 2's most recent move has created a winning move for Player 1 which they will make instead. Consider Figure \ref{fig:grid-cycle-image} with vertices $w,x,y,$ and $z$, and its 180 degree rotation with the vertices $w^{\prime}, x^{\prime}, y^{\prime},$ and $z^{\prime}$.

\bigskip
\noindent Without loss of generality, assume the rotate-reverse move for Player 1 to make is $y^{\prime} \rightarrow x^{\prime}$ and this would give Player 2 a winning move. However, this means that Player 2 has just played $x \rightarrow y$ which created a winning move for Player 1, namely the move $y \rightarrow z$. Player 1 will make this instead $y^{\prime} \rightarrow x^{\prime}$. 

\end{proof}

\section{Conclusion}

It would be possible to expand the class of $\Theta_{j,2,k}$-graphs to $\Theta_{j,n,k}$-graphs. Although most of these graphs are covered by Theorem \ref{thm:p-paths}, not all. While expanding Theorem \ref{thm:p-paths} directly would be more efficient, finding solutions for a generalized $\Theta_{j,n,k}$ would be easier, and would finish the class of Theta graphs. To complete this, one would need to find strategies for:

\begin{itemize}
    \item $n = 3$
    \begin{itemize}
        \item $\Theta_{1,3,k}$ with $k \geq 2$
        \item $\Theta_{3,3,k}$ with $k \geq 2$
        \item $\Theta_{j,3,k}$ with $j \geq 5$ odd and $k \geq 2$ even
    \end{itemize}
    \item $n \geq 4$ even
    \begin{itemize}
        \item $\Theta_{1,n,k}$ with $k \geq 3$ odd
        \item $\Theta_{3,n,k}$ with $k \geq 3$ odd
    \end{itemize}
    \item $n \geq 5$ odd 
    \begin{itemize}
        \item $\Theta_{1,n,k}$ with $k \geq 2$ even
        \item $\Theta_{3,n,k}$ with $k \geq 2$ even
    \end{itemize}
\end{itemize}

\bigskip
\noindent For all the game boards we examined the parity of the total number of edges determined the winner of the game. The parity conjecture made in \cite{alvarado2021game} states that Player 1 has a winning strategy when there is an odd number of total edges and Player 2 has a winning strategy when there is an even number of total edges. However, consider Figure \ref{fig:counterexample}. In this graph, there are four edges, but by playing $\overrightarrow{cd}$ to begin, Player 1 has won the game. This is because $\overline{ab}$ is automatically an unmarkable edge, since it must create either a sink or a source at vertex $a$. So, we modify the parity conjecture as follows:

\begin{conjecture}
For game boards in which there is no vertex with degree 1, Player 1 has a winning strategy when there is an odd number of total edges, and Player 2 has a winning strategy when there is an even number of total edges.
\end{conjecture}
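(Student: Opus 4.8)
The plan is to isolate the exact role of the degree-one hypothesis and then split the argument into a counting part and a strategic part. The counting observation is that a degree-one vertex forces its unique incident edge to be unmarkable (directing it either way creates a sink or a source at that vertex), and this is precisely the single odd unmarkable edge that breaks the naive parity count in the counterexample of Figure~\ref{fig:counterexample}. So the first goal would be a \emph{parity lemma}: in any connected gameboard with minimum degree at least $2$, every position in which the game ends by exhaustion (no legal move remains and no cycle cell has been formed) has an \emph{even} number of unmarkable edges. Granting this lemma, the number of moves actually made equals the total number of edges minus an even number, so its parity matches that of the total number of edges; hence whenever the game is decided by who plays last, Player~1 wins with an odd edge count and Player~2 wins with an even edge count, exactly as claimed.

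To prove the parity lemma I would first record the structural fact that, with minimum degree at least $2$, an unmarkable edge $uv$ must be the \emph{unique} undirected edge at both $u$ and $v$: if $u$ had a second undirected edge, then directing $uv$ could create neither a source nor a sink at $u$, which forces both failed orientations of $uv$ to occur at $v$; but one failure requires $v$ to become a sink and the other requires $v$ to become a source, and these cannot both hold unless $v$ has degree one. Consequently the unmarkable edges of a terminal position form a matching, and a short case check shows that each such edge joins either two \emph{almost-sources} or two \emph{almost-sinks}. To extract the parity I would generalize the polygon result of \cite{alvarado2021game} together with Lemma~\ref{thm:unmarkable-edges-line}: build the graph from a spanning tree by adding the remaining $E-V+1$ edges one at a time, each addition closing a cycle, and track how the parity of unmarkable edges changes, using Lemma~\ref{thm:unmarkable-edges-line} and Lemma~\ref{thm:implicit-signals-same} to control the contribution of each newly closed cycle. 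The aim is to show that each added edge preserves evenness, reducing the general statement to the single-cycle case that is already known.

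Even with the parity lemma in hand, evenness of the unmarkable edges together with the correct total parity is \emph{not} sufficient, as the position in Figure~\ref{fig:case-a-fails} shows: the parity-favored player can be on the move and yet have every legal move hand the opponent a cycle cell. So the remaining and decisive task is to exhibit, for the parity-favored player $W$, a move-by-move strategy that simultaneously forces $W$ to make the final exhaustion move and never lets the opponent complete a cycle cell out of turn (equivalently, guarantees that whenever a cycle cell first becomes completable it is $W$'s move to complete it). This is where the earlier special-case techniques do not transfer, and I expect it to be the main obstacle: the proofs of Theorem~\ref{p-paths} and Theorem~\ref{thm:grids} relied on a global involution (mirror-reverse, rotate-reverse) or an explicit edge pairing, whereas a general minimum-degree-$2$ gameboard need not admit any such symmetry. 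The realistic route is to search for an invariant preserved on each of $W$'s turns that couples the unmarkable-edge parity with a ``cycle-cell safety'' condition, so that any cycle-cell threat the opponent could create against $W$ can be answered by, or pre-empted with, a cycle-cell threat in $W$'s favor; absent such an invariant, the conjecture would likely have to be attacked one structural family at a time, extending the methods of the $j$--$2$--$k$, $p$-paths, and grid results to progressively larger classes.
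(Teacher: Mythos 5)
You should first be aware that the paper does not prove this statement at all: it is offered as an open conjecture (a repair of the Alvarado et al.\ parity conjecture after the degree-one counterexample of Figure \ref{fig:counterexample}), so there is no paper proof to compare against, and a correct ``proof'' here would be a result strictly beyond the paper. Your proposal is honest about this --- you explicitly leave the strategic half (forcing the last move while never handing the opponent a cycle cell, cf.\ Figure \ref{fig:case-a-fails}) unresolved --- and that admitted gap is essentially the entire content of the conjecture. But the problem is worse than you state, because the half you do claim, the parity lemma, is false. Take the theta graph with two degree-$3$ vertices $u,v$ joined by three internally disjoint paths of length $3$ each (minimum degree $2$, nine edges). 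On two of the paths $u\hbox{--}x\hbox{--}y\hbox{--}v$, direct $x \rightarrow u$ and $y \rightarrow v$; the middle edge $xy$ is then unmarkable, since either orientation creates a source at $x$ or at $y$. On the third path direct $u \rightarrow x$ and $v \rightarrow y$; its middle edge is unmarkable because either orientation creates a sink. All six moves are legal in sequence, $u$ and $v$ each have mixed arrows, no path can ever be fully directed so no cycle cell is completable, and the resulting terminal-by-exhaustion position has exactly three unmarkable edges --- an odd number --- on a board with no degree-one vertex.

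The paper's own machinery already signals this: Lemma \ref{thm:unmarkable-edges-line} says a $\{1,1\}$ or $\{-1,-1\}$ interaction forces an \emph{odd} count on a path, and nothing about minimum degree $2$ prevents an odd number of paths from receiving such interactions. This is precisely why Lemma \ref{thm:proof-of-even-cases} isolates only the special Cases B through E as having guaranteed even parity, and why Theorem \ref{thm:easy-j-2-k-case} and Theorem \ref{p-paths} must show that a player can \emph{force} such configurations rather than observe that all terminal positions are even. Your structural observation is fine --- with minimum degree $2$, an unmarkable edge is the unique undirected edge at both endpoints, so unmarkable edges form a matching joining two almost-sources or two almost-sinks --- but evenness simply does not follow from it, and the proposed spanning-tree induction would break down for the same reason: the parity contribution of each closed cycle depends on the directions chosen during play, not on the graph. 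Note that the counterexample above does not refute the conjecture itself (it only shows Player 1 can play badly on that board); what it refutes is your reduction of the counting part to a position-independent lemma. Both halves of the conjecture --- steering the game toward even-unmarkable terminal positions \emph{and} controlling cycle-cell threats --- are genuinely strategic, which is why the paper leaves the statement open and attacks it one structural family at a time.
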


Alternatively,

\begin{conjecture}
If we do not consider a degree 1 vertex either a sink or a source, Player 1 has a winning strategy when there is an odd number of total edges, and Player 2 has a winning strategy when there is an even number of total edges.
\end{conjecture}

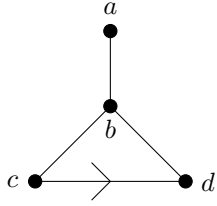
\begin{figure}
    \centering
    \begin{tikzpicture}
    \draw [fill=black] (0,0) circle (2.5pt);
    \draw [fill=black] (-1,-1) circle (2.5pt);
    \draw [fill=black] (1,-1) circle (2.5pt);
    \draw [fill=black] (0,1) circle (2.5pt);
    \draw (0,-0.3) node[] {$b$};
    \draw (-1.3,-1) node[] {$c$};
    \draw (0,1.3) node[] {$a$};
    \draw (1.3,-1) node[] {$d$};
    \draw (0,0)--(1,-1);
    \draw (0,0)--(-1,-1);
    \draw (1,-1)--(-1,-1);
    \draw (0,0)--(0,1);
    \draw (0,-1)--(-0.25,-0.75);
    \draw (0,-1)--(-0.25,-1.25);
    \end{tikzpicture}
    \caption{Player 1 has a winning strategy from this position}
    \label{fig:counterexample}
\end{figure}

There are also new versions of the game that could be played. The most simple of these would to ignore the geometric drawing of the graph, and to make the object of the game to complete a cycle (as defined in graph theory), rather than a cycle cell (from topology). In fact, how much the geometric drawing of a gameboard can affect game play is a nontrivial question. Consider Figure \ref{fig:geometry-issue}. These two graphs are isomorphic. However, because of the cycle cell rules in the Game of Cycles, they must be played differently. The image on the left has two cells of length four and two of length three, while the image on the right has three cells of length four and one of length three. Knowing when a different drawing of the planar graph affects the game could be important to understanding the Game of Cycles more generally.

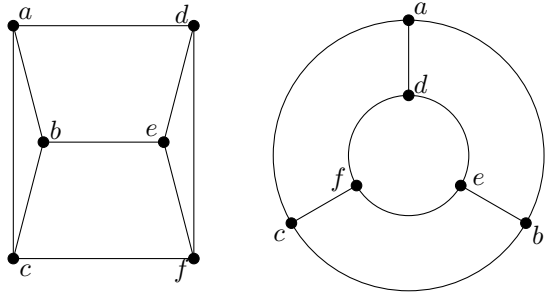
\begin{figure}
    \centering
    \begin{tikzpicture}[scale = 0.8]
    \draw (0,0)--(2,0);
    \draw (0,0)--(-0.5,1.9365);
    \draw (0,0)--(-0.5,-1.9365);
    \draw (2,0)--(2.5,1.9365);
    \draw (2,0)--(2.5,-1.9365);
    \draw (-0.5,1.9365)--(2.5,1.9365);
    \draw (-0.5,-1.9365)--(2.5,-1.9365);
    \draw (-0.5,1.9365)--(-0.5,-1.9365);
    \draw (2.5,1.9365)--(2.5,-1.9365);
    \draw [fill=black] (0,0) circle (2.5pt);
    \draw [fill=black] (2,0) circle (2.5pt);
    \draw [fill=black] (-0.5,1.9365) circle (2.5pt);
    \draw [fill=black] (-0.5,-1.9365) circle (2.5pt);
    \draw [fill=black] (2.5,1.9365) circle (2.5pt);
    \draw [fill=black] (2.5,-1.9365) circle (2.5pt);
    \draw (-0.3,2.1365) node[] {$a$};
    \draw (0.2,0.2) node[] {$b$};
    \draw (-0.3,-2.1365) node[] {$c$};
    \draw (2.3,2.1365) node[] {$d$};
    \draw (1.8,0.2) node[] {$e$};
    \draw (2.3,-2.1365) node[] {$f$};
    \end{tikzpicture}
    \hspace{0.25in}
    \begin{tikzpicture}[scale = 0.8]
`   \draw (0,0) circle (2.25cm);
    \draw (0,0) circle (1cm);
    \draw (0,1)--(0,2.25);
    \draw (-.866,-0.5)--(-1.9486,-1.125);
    \draw (.866,-0.5)--(1.9486,-1.125);
    \draw [fill=black] (0,1) circle (2.5pt);
    \draw [fill=black] (0,2.25) circle (2.5pt);
    \draw [fill=black] (-.866,-0.5) circle (2.5pt);
    \draw [fill=black] (-1.9486,-1.125) circle (2.5pt);
    \draw [fill=black] (.866,-0.5) circle (2.5pt);
    \draw [fill=black] (1.9486,-1.125) circle (2.5pt);
    \draw (0.2,2.45) node[] {$a$};
    \draw (2.1486,-1.325) node[] {$b$};
    \draw (-2.1486,-1.325) node[] {$c$};
    \draw (0.2,1.2) node[] {$d$};
    \draw (1.166,-0.4) node[] {$e$};
    \draw (-1.166,-0.4) node[] {$f$};
    \end{tikzpicture}
    \caption{These are the same graph, but they have a different cell type}
    \label{fig:geometry-issue}
\end{figure}

\bigskip
\noindent One other step would be to play the Game of Cycles on a tree, thus rendering the ``cycle" aspect worthless. In fact, Mathews \cite{mathews2021game} has already begun to consider this topic, naming it the Game of Arrows. Specifically, they have focused on 3-legged spider graphs. Here, Mathews makes the rule that leaves of the graph can be directed, allowing for some sources and sinks. The work in this paper regarding signals and interactions may be effective for this topic.

\bigskip
\noindent Lastly, it could be interesting to play the game on nonplanar graphs embedded in different surfaces.

\bibliographystyle{plain}
\bibliography{bibliography.bib}

\begin{thebibliography}{1}

\bibitem{alvarado2021game}
Ryan Alvarado, Maia Averett, Benjamin Gaines, Christopher Jackson, Mary~Leah
  Karker, Malgorzata~Aneta Marciniak, Francis Su, and Shanise Walker.
\newblock The game of cycles.
\newblock {\em The American Mathematical Monthly}, 128(10):868--887, 2021.

\bibitem{mathews2021game}
Bryant~G Mathews.
\newblock The game of arrows on 3-legged spider graphs.
\newblock {\em arXiv preprint arXiv:2110.08738}, 2021.

\bibitem{su2020mathematics}
Francis Su.
\newblock {\em Mathematics for human flourishing}.
\newblock Yale University Press, 2020.

\end{thebibliography}

\end{document}